\documentclass[12pt,reqno]{amsart}

\usepackage{amssymb,amsmath}
\usepackage{amsfonts}
\usepackage{amsthm}
\usepackage{mathrsfs}
\usepackage{graphicx}
\usepackage[T1]{fontenc}

\theoremstyle{plain} 
\newtheorem{theorem}{Theorem}

\theoremstyle{definition} 
\newtheorem{definition}[theorem]{Definition}
\newtheorem{remark}[theorem]{Remark}
\newtheorem{example}[theorem]{Example}
\newcommand{\R}{\ensuremath{\mathbb{R}}}
\newcommand{\Z}{\ensuremath{\mathbb{Z}}}
\newcommand{\T}{\ensuremath{\mathbb{T}}}
\newcommand{\N}{\ensuremath{\mathbb{N}}}
\newcommand{\C}{\ensuremath{\mathbb{C}}}
\newcommand{\p}{\ensuremath{\mathbb{P}}}

\numberwithin{equation}{section}
\numberwithin{theorem}{section}

\small\normalsize

\begin{document}

\title{Equilibrium Stability for a Continuous Time Scale with Discrete Uniform gaps}
\author[Anderson]{Douglas R. Anderson} 
\address{Department of Mathematics \\
         Concordia College \\
         Moorhead, MN 56562 USA \\  https://orcid.org/0000-0002-3069-2816}
\email{andersod@cord.edu}

\begin{abstract} 
We investigate the equilibrium (trivial solution) stability, also known as Lyapunov stability, of a certain first-order linear complex constant coefficient dynamic equation on the time scale $\p_{\alpha,\beta}$, which has continuous intervals of length $\alpha>0$ followed by discrete gaps of length $\beta>0$. In particular, we establish results in the case of this specific time scale, for coefficient values in the complex plane, including where the exponential function alternates in sign. In our analysis, we employ the Lambert $W$ function. For increasing gap size $\beta$ relative to $\alpha$, we prove that the complex constant coefficient undergoes a bifurcation in its parameter space. We establish interesting results for both the delta dynamic equation and the nabla dynamic equation. Lastly, we connect these results to those related to Hyers--Ulam stability of the same nabla equations.
\end{abstract}

\subjclass[2020]{34N05, 34A30, 34D20, 39A06, 39A30, 39A45}
\keywords{Lyapunov stability, Hyers--Ulam stability, forward difference operator, backward difference operator, bifurcation, Lambert $W$ function}

\maketitle
\thispagestyle{empty}


\section{Introduction}

Hilger \cite{hilger} introduced time scales analysis to unify continuous and discrete analysis, with the further benefit of supplying a robust tool for studying hybrid continuous-discrete systems and systems with non-uniform step sizes, where a time scale is any closed subset of the real line $\R$. A natural question for a time-scales equation relates to its stability. 
In a landmark paper, P\"{o}tzsche, Siegmund, and Wirth \cite{psw} gave a spectral characterization of exponential stability for linear time-invariant
systems on time scales. Using the first initials of these authors' surnames, these stability regions have become known as PSW regions. The tool for such analysis that PSW utilized, however, can be difficult to use in practice. In a recent series of papers, Jackson and Davis \cite{jd1,jd2} and Jackson, Davis, and Poulsen \cite{jdp} have used an ergodic approach to characterizing the PSW region, which is related to the region of convergence for the time-scales Laplace transform integral. 


In this paper, a specific time scale $\T$ with fixed gap size that displays both continuous and discrete properties is studied and its PSW stability regions are explicitly determined. In particular, let $\T=\p_{\alpha,\beta}$ for continuous interval length $\alpha>0$ and discrete gap size $\beta>0$, namely
\[ \p_{\alpha,\beta} = \bigcup_{k=0}^{\infty}[k(\alpha+\beta),k(\alpha+\beta)+\alpha], \]
and consider the differential operator defined by
\[ x^{\Delta}(t) = \begin{cases} \frac{d}{dt}x(t) &\text{for}\; t\in[k(\alpha+\beta),k(\alpha+\beta)+\alpha) \\ \frac{x(t+\beta)-x(t)}{\beta} &\text{for}\; t=k(\alpha+\beta)+\alpha. \end{cases} \]
See Bohner and Peterson \cite[Examples 1.38--1.40]{bp} for an introduction to this time scale, which can model a transmission signal or a data burst broadcast over a short time period, and then repeated, or a system or device that operates continuously for a fixed time, shuts off, and then runs again; in biology, this time scale may model an organism that lives a fixed unit of time, followed by hibernation or dormancy, and then is active again, and so on.  

We will be investigating some stability questions for this time scale and this derivative operator, in the case of the trivial solution to a linear equation, defined below.


\begin{definition}[Equilibrium Stability]
Consider the dynamic equation given by
\begin{equation}\label{maineq}
 x^\Delta(t) = \lambda x(t), \quad \lambda\in\C\backslash\left\{-\frac{1}{\beta}\right\}, \quad t\in\p_{\alpha,\beta}.
\end{equation}
We say that the equilibrium (trivial) solution of \eqref{maineq} is stable on $\p_{\alpha,\beta}$ if and only if given $\varepsilon>0$, there exists a $\delta>0$ such that if $x$ is a solution of \eqref{maineq} with $|x(0)|=|x_0|<\delta$, then for all $t\in\p_{\alpha,\beta}$ we have $|x(t)|<\varepsilon$ on $\p_{\alpha,\beta}$. Additionally, we say the equilibrium solution of \eqref{maineq} is asymptotically stable on $\p_{\alpha,\beta}$ if and only if there exists $\delta>0$ such that if $|x_0|<\delta$, then $\displaystyle\lim_{t\rightarrow\infty} |x(t)|=0$. The equilibrium solution of \eqref{maineq} is unstable if it is not stable.
\end{definition}

In this work, we consider the time scale $\T=\p_{\alpha,\beta}$, and the time scale eigenvalue problem given in \eqref{maineq}. For $t\in\T$, we have the forward gap operator $\sigma$ defined by
\[ \sigma(t):=\begin{cases} t&\text{for}\; t\in[k(\alpha+\beta),\;k(\alpha+\beta)+\alpha), \\ t+\beta&\text{for}\; t=k(\alpha+\beta)+\alpha. \end{cases} \]
For $\lambda\in\C\backslash\left\{-\frac{1}{\beta}\right\}$, the exponential function $e_\lambda(t,0)$ is given by
\[
 e_{\lambda}(t,0)=(1+\beta\lambda)^{k}e^{\lambda(t-k\beta)}, \quad t\in\big[k(\alpha+\beta), k(\alpha+\beta)+\alpha\big], \quad k\in\N_{0},
\]
which can also be written as 
\begin{equation}\label{pabexp0} 
 e_{\lambda}(t,0)=\left[(1+\beta\lambda)e^{\alpha\lambda}\right]^{k}e^{\lambda j}, \quad t=k(\alpha+\beta)+j, \quad j\in[0,\alpha]. 
\end{equation}
Clearly, the exponential function in \eqref{pabexp0} is well defined for $\lambda\in\C\backslash\left\{-\frac{1}{\beta}\right\}$. Notice that
\begin{equation}\label{xform}
 x(t)=x_0e_{\lambda}(t,0), \quad t\in\T,
\end{equation}
is the general solution of \eqref{maineq}, for the exponential function $e_{\lambda}$ given in \eqref{pabexp0}.

Throughout the paper, we will need to employ the Lambert $W$ function, see Corless et al \cite{Wfunction}, which we denote by $W_z$, where $W_z$ satisfies $W_z(u)e^{W_z(u)}=u$, for every $z\in\Z$. For example, using \eqref{pabexp0} and $t=k(\alpha+\beta)$, we have
\[ e_{\lambda}(k(\alpha+\beta),0) = [(1+\beta\lambda)e^{\alpha\lambda}]^k. \]
We will assume throughout that $\lambda\ne -\frac{1}{\beta}$, to prevent the exponential function from vanishing. Moreover, we will see that other key values for $\lambda\in\R$ include when the base $(1+\beta\lambda)e^{\alpha\lambda}=\pm 1$. If $\lambda=0$, then $(1+\beta\lambda)e^{\alpha\lambda}=1$, but, for $\lambda\in\R$, we note here that for the branches $z=-1,0$ of the Lambert $W=W_z$ function,
\[ (1+\beta\lambda)e^{\alpha\lambda}=-1 \Longleftrightarrow \lambda = \frac{-1}{\beta}+\frac{1}{\alpha}W_0\left(-\frac{\alpha}{\beta}e^{\frac{\alpha}{\beta}}\right) \;\text{and}\; \beta\ge\frac{\alpha}{W_0(e^{-1})}\approx 3.59112\alpha, \] 
where $\beta>0$ is the gap size, and $W_0$ is the principal branch of the Lambert $W$ function. In particular, if $\beta=\frac{\alpha}{W_0(e^{-1})}$ and $\lambda=-\frac{1}{\alpha}-\frac{1}{\beta}\approx \frac{-1.27846}{\alpha}$, then $e_{\lambda}(k(\alpha+\beta),0)=(-1)^k$.


\section{Equilibrium Stability on $\p_{\alpha,\beta}$}

We now give our first new results, when the eigenvalue $\lambda$ in \eqref{maineq} is a real number; in a later section, we will consider the more general case of $\lambda\in\C$. Moreover, we will fix $\alpha>0$ and let the gap size $\beta>0$ range over all positive real numbers in relation to $\alpha$. Of course, one could also fix $\beta>0$ and let $\alpha>0$ vary, as well. For the sake of completeness, we will include the details of proofs for this specific time scale 
$\T=\p_{\alpha,\beta}$. 


\begin{theorem}[Delta equation]\label{reallambdathm}
Fix $\alpha>0$, and let $\lambda\in\R\backslash\left\{-\frac{1}{\beta}\right\}$. We have the following cases.
\begin{enumerate}
\item Consider a small gap size $\beta$ that satisfies $0<\beta<\frac{\alpha}{W_0(e^{-1})}$.
 \begin{enumerate}
   \item If $\lambda\in\left(-\infty,\; -\frac{1}{\beta}\right)\cup\left(-\frac{1}{\beta},0\right)$, then the trivial solution of \eqref{maineq} is asymptotically stable. 
   \item If $\lambda=0$, then \eqref{maineq} is stable. 
   \item If $\lambda\in\left(0,\infty\right)$, then \eqref{maineq} is unstable.
 \end{enumerate}
\item Consider a critical gap size $\beta$ that satisfies $\beta=\frac{\alpha}{W_0(e^{-1})}$. Then, $(1+\beta\lambda)e^{\alpha\lambda}=-1$ at $\lambda=-\frac{1}{\alpha}\left(1+W_0(e^{-1})\right)$, and we have the following subcases.
 \begin{enumerate}
   \item If $\lambda\in\left(-\infty,\; -\frac{1}{\alpha}\left(1+W_0(e^{-1})\right)\right)\cup\left(-\frac{1}{\alpha}\left(1+W_0(e^{-1})\right),\; \frac{W_0(e^{-1})}{-\alpha}\right)\cup\left(\frac{W_0(e^{-1})}{-\alpha},0\right)$, 
	       then the trivial solution of \eqref{maineq} is asymptotically stable. 
   \item If $\lambda=-\frac{1}{\alpha}\left(1+W_0(e^{-1})\right)$ or $\lambda=0$, then the trivial solution of \eqref{maineq} is stable.
   \item If $\lambda\in(0,\infty)$, then the trivial solution of \eqref{maineq} is unstable.
 \end{enumerate}
\item Consider a large gap size $\beta$ that satisfies $\beta>\frac{\alpha}{W_0(e^{-1})}$. Then, $(1+\beta\lambda)e^{\alpha\lambda}=-1$ at 
 $$ \lambda_{-1}:=-\frac{1}{\beta}+\frac{1}{\alpha}W_{-1}\left(-\frac{\alpha}{\beta}e^{\frac{\alpha}{\beta}}\right) \quad\text{and}\quad 
    \lambda_{0}:=-\frac{1}{\beta}+\frac{1}{\alpha}W_{0}\left(-\frac{\alpha}{\beta}e^{\frac{\alpha}{\beta}}\right), $$ 
		and we have the following subcases.
 \begin{enumerate}
   \item If $\lambda\in\left(-\infty,\; \lambda_{-1}\right)\cup\left(\lambda_{0},\; -\frac{1}{\beta}\right)\cup\left(-\frac{1}{\beta},0\right)$, then the trivial solution 
	       of \eqref{maineq} is asymptotically stable. 
   \item If $\lambda=\lambda_{-1}$, $\lambda=\lambda_{0}$, or $\lambda=0$, then the trivial solution of \eqref{maineq} is stable.
   \item If $\lambda\in\left(\lambda_{-1},\lambda_{0}\right)\cup(0,\infty)$, 
	       then the trivial solution of \eqref{maineq} is unstable.
 \end{enumerate}
\end{enumerate}
\end{theorem}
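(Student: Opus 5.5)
The plan is to reduce everything to the real function
\[ f(\lambda):=(1+\beta\lambda)e^{\alpha\lambda}, \qquad \lambda\in\R, \]
since by \eqref{pabexp0} and \eqref{xform} every solution has the form $x(t)=x_0 f(\lambda)^k e^{\lambda j}$ with $t=k(\alpha+\beta)+j$ and $j\in[0,\alpha]$. The factor $e^{\lambda j}$ lies between $\min\{1,e^{\lambda\alpha}\}$ and $\max\{1,e^{\lambda\alpha}\}$, so it is bounded above and away from zero, uniformly in $t$.

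The first step is the stability criterion:
\begin{itemize}
\item If $|f(\lambda)|<1$, then $|x(t)|\le |x_0|\max\{1,e^{\lambda\alpha}\}|f(\lambda)|^k\to0$, so the trivial solution is asymptotically stable. For the $\varepsilon$--$\delta$ condition, take $\delta=\varepsilon/\max\{1,e^{\lambda\alpha}\}$.
\item If $|f(\lambda)|=1$, then $|x(t)|\le|x_0|\max\{1,e^{\lambda\alpha}\}$, which gives stability but not asymptotic stability.
\item If $|f(\lambda)|>1$, then $|x(k(\alpha+\beta))|=|x_0||f(\lambda)|^k\to\infty$ for every $x_0\ne0$, so the trivial solution is unstable.
\end{itemize}
The problem therefore reduces to locating the sets $\{|f|<1\}$, $\{|f|=1\}$ and $\{|f|>1\}$ on $\R\setminus\{-1/\beta\}$.

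Next, analyze the shape of $f$. We have $f'(\lambda)=e^{\alpha\lambda}(\beta+\alpha+\alpha\beta\lambda)$, so $f$ decreases on $(-\infty,\lambda^*)$ and increases on $(\lambda^*,\infty)$, where
\[ \lambda^*=-\frac1\alpha-\frac1\beta. \]
The limiting behavior is $f\to0^-$ as $\lambda\to-\infty$ and $f\to\infty$ as $\lambda\to\infty$. The zero is at $-1/\beta$, we have $f(0)=1$, and $f>1$ on $(0,\infty)$. On $(-1/\beta,0)$ we have $0<f<1$, because $f$ is increasing there and runs from $0$ to $1$. The equation $f=1$ has no other solution: on $\lambda<-1/\beta$ we have $f<0$.

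So everything hinges on comparing the minimum value $f(\lambda^*)=-\frac{\beta}{\alpha}e^{-1-\alpha/\beta}$ with $-1$.
\begin{itemize}
\item $f(\lambda^*)>-1$ iff $\frac{\beta}{\alpha}e^{-\alpha/\beta}<e$. Setting $u=\alpha/\beta$, this becomes $ue^{u}>e^{-1}$, i.e.\ $u>W_0(e^{-1})$, i.e.\ $\beta<\alpha/W_0(e^{-1})$.
\item In that small-gap case, $|f|<1$ on all of $\lambda<-1/\beta$ as well, which gives case (1).
\item At equality $\beta=\alpha/W_0(e^{-1})$, the value $-1$ is attained exactly once, at $\lambda^*$. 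The identity $\alpha/\beta=W_0(e^{-1})$ turns $\lambda^*$ into $-\frac{1}{\alpha}(1+W_0(e^{-1}))$. The other excluded point is $-1/\beta=-W_0(e^{-1})/\alpha$. This gives case (2).
\end{itemize}

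For the large gap, $f(\lambda^*)<-1$, so $f=-1$ has exactly two roots, one on each monotone branch, and $|f|>1$ strictly between them. To identify the roots, write $1+\beta\lambda=-e^{-\alpha\lambda}$ and put $w=\alpha\lambda+\alpha/\beta$. This gives
\[ we^{w}=-\frac{\alpha}{\beta}e^{\alpha/\beta}\in(-e^{-1},0). \]
The two real solutions are $W_{-1}$ and $W_0$ of that argument, which yields $\lambda_{-1}<\lambda^*<\lambda_0$. We also have $\lambda_0<-1/\beta$ since $f(-1/\beta)=0$, and case (3) follows.

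The main care point is this Lambert-$W$ bookkeeping. Specifically, I must check that the argument lies in $(-e^{-1},0)$ exactly when $\beta>\alpha/W_0(e^{-1})$, and that the branch assignment matches the ordering $\lambda_{-1}<\lambda_0$. This follows because $W_{-1}\le-1\le W_0$ on that interval.
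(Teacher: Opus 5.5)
Your proposal is correct and follows the same route as the paper: classify $\lambda$ by whether the base $(1+\beta\lambda)e^{\alpha\lambda}$ has modulus less than, equal to, or greater than $1$, and read off asymptotic stability, stability, or instability from $x(t)=x_0\left[(1+\beta\lambda)e^{\alpha\lambda}\right]^k e^{\lambda j}$. The paper only asserts the range of the base on each interval (deferring the critical case to the author's earlier work), while your analysis of $f$ supplies the justification it leaves implicit: the minimum value $-\frac{\beta}{\alpha}e^{-1-\alpha/\beta}$ at $\lambda^*=-\frac{1}{\alpha}-\frac{1}{\beta}$, the threshold condition $ue^{u}=e^{-1}$, and the branch ordering $W_{-1}<-1<W_0$.
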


\begin{proof}
Case (i)(a). Suppose $\lambda<-\frac{1}{\beta}$. Since $0<\beta<\frac{\alpha}{W_0(e^{-1})}$, the base of the exponential function \eqref{pabexp0} satisfies $(1+\beta\lambda)e^{\alpha\lambda}\in(-1,0)$, and $e_{\lambda}(t,0)=(1+\beta\lambda)^ke^{\lambda(t-k\beta)}$ goes to zero. Since 
$$ x(t) = x_0e_{\lambda}(t,0) $$
is a well-defined solution of \eqref{maineq}, it follows that the trivial solution of \eqref{maineq} is asymptotically stable. Suppose $-\frac{1}{\beta}<\lambda<0$. Then, the base of the exponential function \eqref{pabexp0} satisfies $(1+\beta\lambda)e^{\alpha\lambda}\in(0,1)$, and $e_{\lambda}(t,0)=(1+\beta\lambda)^ke^{\lambda(t-k\beta)}$ goes to zero; it follows that the trivial solution of \eqref{maineq} is again asymptotically stable.

Case (i)(b). If $\lambda=0$, then $e_{\lambda}(t,0)=(1)^ke^{0}=1$ for all $t\in\p_{\alpha,\beta}$, making the trivial solution of \eqref{maineq} stable.

Case (i)(c). If $\lambda>0$, then $(1+\beta\lambda)e^{\alpha\lambda}>1$, making the trivial solution of \eqref{maineq} unstable.

Case (ii)(a). Let $\beta=\frac{\alpha}{W_0(e^{-1})}$, and assume 
$$\lambda\in\left(-\infty,\; -\frac{1}{\alpha}\left(1+W_0(e^{-1})\right)\right)\cup\left(-\frac{1}{\alpha}\left(1+W_0(e^{-1})\right),\; \frac{W_0(e^{-1})}{-\alpha}\right)\cup\left(\frac{W_0(e^{-1})}{-\alpha},0\right).$$
Then, we have $(1+\beta\lambda)e^{\alpha\lambda}\in(-1,0)$; see \cite[p 7]{and}. This implies, for any $k\in\N_{0}$ and for all $t\in\T$, that
\[ \lim_{t\rightarrow\infty}|e_\lambda(t,0)|=0. \]
If $\lambda\in\left(\frac{W_0(e^{-1})}{-\alpha},0\right)$, then $(1+\beta\lambda)e^{\alpha\lambda}\in(0,1)$, and the trivial solution is asymptotically stable. 

Case (ii)(b). If $\lambda=-\frac{1}{\alpha}\left(1+W_0(e^{-1})\right)$, then $(1+\beta\lambda)e^{\alpha\lambda}=-1$; if $\lambda=0$, then $(1+\beta\lambda)e^{\alpha\lambda}=1$.  In either case, the trivial solution of \eqref{maineq} stable.

Case (ii)(c) is the same as Case (i)(c).

Case (iii)(a). If $\lambda\in\left(-\infty,\; \lambda_{-1}\right)\cup\left(\lambda_{0},\; -\frac{1}{\beta}\right)$, the base of the exponential function satisfies $-1<(1+\beta\lambda)e^{\alpha\lambda}<0$. If $\lambda\in\left(-\frac{1}{\beta},0\right)$, the base of the exponential function satisfies $0<(1+\beta\lambda)e^{\alpha\lambda}<1$. Consequently, the trivial solution of \eqref{maineq} is asymptotically stable.

Case (iii)(b). If $\lambda=\lambda_{-1}$ or $\lambda=\lambda_{0}$, then $(1+\beta\lambda)e^{\alpha\lambda}=-1$; if $\lambda=0$, then $(1+\beta\lambda)e^{\alpha\lambda}=1$. It follows that the trivial solution of \eqref{maineq} is stable in these cases.

Case (iii)(c). Let the exponential function be given by \eqref{pabexp0}. For $\lambda\in\left(\lambda_{-1},\lambda_{0}\right)$, the base of the exponential function satisfies $(1+\beta\lambda)e^{\alpha\lambda}<-1$. For $\lambda\in\left(0,\infty\right)$, the base of the exponential function satisfies $(1+\beta\lambda)e^{\alpha\lambda}>1$. Thus, in either case, the trivial solution is unstable.  
\end{proof}


\begin{example}
Fix $\alpha>0$, and let $\beta=\frac{2\alpha}{W_0(e^{-1})}>\frac{\alpha}{W_0(e^{-1})}$. Then, $(1+\beta\lambda)e^{\alpha\lambda}=-1$ at 
 $$ \lambda_{-1}:=-\frac{1}{\beta}+\frac{1}{\alpha}W_{-1}\left(-\frac{\alpha}{\beta}e^{\frac{\alpha}{\beta}}\right)\approx\dfrac{3.03479}{-\alpha} \quad\text{and}\quad 
    \lambda_{0}:=-\frac{1}{\beta}+\frac{1}{\alpha}W_{0}\left(-\frac{\alpha}{\beta}e^{\frac{\alpha}{\beta}}\right)\approx\dfrac{0.333598}{-\alpha}, $$
and 
$$ -\frac{1}{\beta} \approx \frac{0.139232}{-\alpha}. $$
Then, the following cases hold by Theorem \ref{reallambdathm}(iii).
 \begin{enumerate}
   \item If $\lambda\in\left(-\infty,\; \dfrac{3.03479}{-\alpha}\right)\cup\left(\dfrac{0.333598}{-\alpha},\; \dfrac{0.139232}{-\alpha}\right)\cup\left(\dfrac{0.139232}{-\alpha},0\right)$, then the trivial solution of \eqref{maineq} is asymptotically stable. 
   \item If $\lambda=\lambda_{-1}$, $\lambda=\lambda_{0}$, or $\lambda=0$, then the trivial solution of \eqref{maineq} is stable.
   \item If $\lambda\in\left(\dfrac{3.03479}{-\alpha},\;\dfrac{0.333598}{-\alpha}\right)\cup(0,\infty)$, 
	       then the trivial solution of \eqref{maineq} is unstable.
 \end{enumerate}
\end{example}


\begin{remark}\label{remark16}
Let $\lambda\in\R\backslash\left\{-\frac{1}{\beta}\right\}$. Fix the gap size $\beta>\frac{\alpha}{W_0(e^{-1})}$, as above in Theorem \ref{reallambdathm} (iii), and let $\alpha$ tend to 0. Then, 
$$ \lim_{\alpha\rightarrow 0}\p_{\alpha,\beta} = \beta\Z. $$
For $\alpha=0$, we see that $(1+\beta\lambda)=-1$ at 
$$ \lim_{\alpha\rightarrow 0}\lambda_{0}=\lim_{\alpha\rightarrow 0}\left(-\frac{1}{\beta}+\frac{1}{\alpha}W_{0}\left(-\frac{\alpha}{\beta}e^{\frac{\alpha}{\beta}}\right)\right)=-\frac{2}{\beta}, $$
and that $\displaystyle\lim_{\alpha\rightarrow 0}\lambda_{-1}=-\infty$. With $\alpha=0$, we have the following subcases from Theorem \ref{reallambdathm}(iii).
\begin{enumerate}				
\item[(a)] If $\lambda\in\left(-\frac{2}{\beta},\; -\frac{1}{\beta}\right)\cup\left(-\frac{1}{\beta},0\right)$, then the trivial solution of \eqref{maineq} is asymptotically stable. 
\item[(b)] If $\lambda=-\frac{2}{\beta}$ or $\lambda=0$, then the trivial solution of \eqref{maineq} is stable.
\item[(c)] If $\lambda\in\left(-\infty,\; -\frac{2}{\beta}\right)\cup(0,\infty)$, then the trivial solution of \eqref{maineq} is unstable.
\end{enumerate}
\end{remark}


\begin{remark}
If one uses the nabla backward difference operator on $\p_{\alpha,\beta}$ instead of the Delta forward difference operator, then analogous results may be derived. The nabla differential/difference operator in the nabla case is defined by
\[ x^{\nabla}(t) = \begin{cases} \frac{d}{dt}x(t) &: t\in(k(\alpha+\beta),k(\alpha+\beta)+\alpha] \\ \frac{x(t)-x(t-\beta)}{\beta} &: t=k(\alpha+\beta), \end{cases} \]
and the nabla exponential function is given via
\begin{equation}\label{nablaexpo}
 \widehat{e}_\lambda(t,0) = \frac{e^{\lambda t}}{[(1-\beta\lambda)e^{\beta\lambda}]^k}, \qquad \lambda\in\C\backslash\left\{\frac{1}{\beta}\right\}.
\end{equation}
Thus, for the nabla dynamic equation
\begin{equation}\label{nabeq}
 x^\nabla(t) = \lambda x(t), \quad \lambda\in\C\backslash\left\{\frac{1}{\beta}\right\}, \quad t\in\p_{\alpha,\beta},
\end{equation}
we may compare Theorem \ref{reallambdathm} with the following theorem.
\end{remark}


\begin{theorem}[Nabla equation]\label{realnablathm}
Fix $\alpha>0$, and let $\lambda\in\R\backslash\left\{\frac{1}{\beta}\right\}$. We have the following cases.
\begin{enumerate}
 \item Consider a small gap size $\beta$ that satisfies $0<\beta<\frac{\alpha}{W_0(e^{-1})}$.
  \begin{enumerate}
    \item If $\lambda\in(-\infty,0)$, then the trivial solution of \eqref{nabeq} is asymptotically stable. 
    \item If $\lambda=0$, then the trivial solution of \eqref{nabeq} is stable. 
    \item If $\lambda\in\left(0,\frac{1}{\beta}\right)\cup\left(\frac{1}{\beta},\infty\right)$, then the trivial solution of \eqref{nabeq} is unstable.
  \end{enumerate}
\item Consider a critical gap size $\beta$ that satisfies $\beta=\frac{\alpha}{W_0(e^{-1})}$. Then, $(1-\beta\lambda)^{-1}e^{\alpha\lambda}=-1$ at $\widehat{\lambda}=\frac{1}{\alpha}\left(1+W_0(e^{-1})\right)$, and we have the following subcases.
\begin{enumerate}
    \item If $\lambda\in(-\infty,0)$, then the trivial solution of \eqref{nabeq} is asymptotically stable.  
    \item If $\lambda=0$ or $\lambda=\frac{1}{\alpha}\left(1+W_0(e^{-1})\right)$, then the trivial solution of \eqref{nabeq} is stable.
    \item If $\lambda\in\left(0,\frac{W_0(e^{-1})}{\alpha}\right)\cup\left(\frac{W_0(e^{-1})}{\alpha},\;\frac{1}{\alpha}\left(1+W_0(e^{-1})\right)\right)\cup\left(\frac{1}{\alpha}\left(1+W_0(e^{-1})\right),\;\infty\right)$, then the trivial solution of \eqref{nabeq} is unstable.
\end{enumerate}
\item Consider a large gap size $\beta$ that satisfies $\beta>\frac{\alpha}{W_0(e^{-1})}$. Then, $(1-\beta\lambda)^{-1}e^{\alpha\lambda}=-1$ at 
      $$ \widehat{\lambda}_{-1}:=\frac{1}{\beta}-\frac{1}{\alpha}W_{-1}\left(-\frac{\alpha}{\beta}e^{\frac{\alpha}{\beta}}\right) \quad\text{and}\quad   
			\widehat{\lambda}_{0}:=\frac{1}{\beta}-\frac{1}{\alpha}W_{0}\left(-\frac{\alpha}{\beta}e^{\frac{\alpha}{\beta}}\right), $$ 
			and we have the following subcases.
\begin{enumerate}
\item If $\lambda\in(-\infty,0)\cup\left(\widehat{\lambda}_{0},\widehat{\lambda}_{-1}\right)$, then the trivial solution of \eqref{nabeq} is asymptotically stable. 
\item If $\lambda=0$, $\lambda=\widehat{\lambda}_{0}$, or $\lambda=\widehat{\lambda}_{-1}$, then the trivial solution of \eqref{nabeq} is stable.
\item If $\lambda\in\left(0,\frac{1}{\beta}\right)\cup\left(\frac{1}{\beta},\; \widehat{\lambda}_{0}\right)\cup\left(\widehat{\lambda}_{-1},\; \infty\right)$, 
then the trivial solution of \eqref{nabeq} is unstable.
\end{enumerate}
\end{enumerate}
\end{theorem}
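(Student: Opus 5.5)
The plan is to reduce the nabla statement to the real-variable analysis already carried out for Theorem~\ref{reallambdathm}, by rewriting the nabla exponential \eqref{nablaexpo} in the same ``base to the power $k$'' form as \eqref{pabexp0}.

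\textbf{Step 1: the nabla base.} For $t=k(\alpha+\beta)+j$ with $j\in[0,\alpha]$, \eqref{nablaexpo} gives
\[
 \widehat{e}_\lambda(t,0)=\frac{e^{\lambda k(\alpha+\beta)}e^{\lambda j}}{(1-\beta\lambda)^k e^{k\beta\lambda}}
 =\left[\frac{e^{\alpha\lambda}}{1-\beta\lambda}\right]^k e^{\lambda j}.
\]
Since $e^{\lambda j}$ is bounded above and below by positive constants for $j\in[0,\alpha]$, the solution $x(t)=x_0\widehat{e}_\lambda(t,0)$ of \eqref{nabeq} behaves as follows:
\begin{itemize}
\item it tends to $0$ if $|b(\lambda)|<1$;
\item it stays bounded by a constant multiple of $|x_0|$ if $|b(\lambda)|=1$;
\item it is unbounded if $|b(\lambda)|>1$.
\end{itemize}
Here $b(\lambda):=(1-\beta\lambda)^{-1}e^{\alpha\lambda}$. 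These give asymptotic stability, stability and instability respectively, exactly as in the proof of Theorem~\ref{reallambdathm}.

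\textbf{Step 2: reduction to the delta case.} Set $g(\mu):=(1+\beta\mu)e^{\alpha\mu}$, the delta base. Then
\[
 b(\lambda)=\frac{1}{g(-\lambda)} .
\]
Hence $|b(\lambda)|<1$, $=1$, $>1$ if and only if $|g(-\lambda)|>1$, $=1$, $<1$ respectively. So the nabla stability classes are the delta classes of Theorem~\ref{reallambdathm}, reflected by $\lambda\mapsto-\lambda$ and with ``asymptotically stable'' and ``unstable'' interchanged. The excluded point $\lambda=1/\beta$ corresponds to $\mu=-1/\beta$.

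In the critical case, $\beta=\alpha/W_0(e^{-1})$ gives $W_0(e^{-1})/\alpha=1/\beta$, so the excluded point there is again $1/\beta$. Reflecting $\lambda_{-1},\lambda_0$ gives exactly $\widehat{\lambda}_{-1}=-\lambda_{-1}$ and $\widehat{\lambda}_0=-\lambda_0$. For instance, in case (iii) the delta-unstable interval $(\lambda_{-1},\lambda_0)$, where $g<-1$, becomes the asymptotically stable interval $(\widehat{\lambda}_0,\widehat{\lambda}_{-1})$. The delta region $\mu\in(-\infty,\lambda_{-1})\cup(\lambda_0,-1/\beta)$, where $-1<g<0$, becomes unstable. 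Similarly $\mu>0$ (where $g>1$) becomes $\lambda<0$, asymptotically stable, and $-1/\beta<\mu<0$ (where $0<g<1$) becomes $0<\lambda<1/\beta$, unstable. Each of the three cases is then read off from the corresponding case of Theorem~\ref{reallambdathm}.

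\textbf{Main obstacle.} The real content is the claim behind Theorem~\ref{reallambdathm} about where $g$ lies relative to $-1$ on $(-\infty,-1/\beta)$; the proof above cites \cite{and} for this. To make it self-contained I would first compute
\[
 g'(\mu)=e^{\alpha\mu}\bigl(\beta+\alpha(1+\beta\mu)\bigr).
\]
This shows $g$ is increasing on $(0,\infty)$ and on $(-1/\beta,0)$, with values above $1$ and in $(0,1)$ respectively. On $(-\infty,-1/\beta)$, $g$ decreases from $0^-$ to a unique minimum at $\mu^*=-1/\beta-1/\alpha$ and then increases back to $0$. The minimum value is
\[
 g(\mu^*)=-\frac{\beta}{\alpha}e^{-1-\alpha/\beta}.
\]
Then $g(\mu^*)<-1$, $=-1$, or $>-1$ exactly when $\frac{\alpha}{\beta}e^{\alpha/\beta}<e^{-1}$, $=e^{-1}$, or $>e^{-1}$. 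This is equivalent to $\beta$ being greater than, equal to, or less than $\alpha/W_0(e^{-1})$, which yields the trichotomy in the gap size.

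When $\beta>\alpha/W_0(e^{-1})$, the two solutions of $g(\mu)=-1$ come from rewriting the equation as $\alpha(\mu+1/\beta)e^{\alpha(\mu+1/\beta)}=-\frac{\alpha}{\beta}e^{\alpha/\beta}$. The argument lies in $(-e^{-1},0)$, so the two solutions are given by the two real branches $W_0$ and $W_{-1}$. This produces $\lambda_0,\lambda_{-1}$, hence $\widehat{\lambda}_0,\widehat{\lambda}_{-1}$. In the critical case the double root is $\mu^*$, which reflects to $\widehat{\lambda}=\frac1\alpha(1+W_0(e^{-1}))$ after substituting $1/\beta=W_0(e^{-1})/\alpha$.
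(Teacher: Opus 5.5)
Your argument is correct, and it takes a somewhat different route from the one the paper intends. The paper gives no separate proof of this theorem. It presents it as the nabla analogue of Theorem~\ref{reallambdathm}, to be derived the same way: write $\widehat{e}_\lambda(t,0)=b(\lambda)^k e^{\lambda j}$, then locate $b(\lambda)=(1-\beta\lambda)^{-1}e^{\alpha\lambda}$ relative to $0$ and $\pm1$ on each interval. The facts about where the base crosses $-1$ are imported from the earlier work cited in the proof of Theorem~\ref{reallambdathm}.

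Your Step~1 is exactly that. Where you differ is in not redoing the analysis of $e^{\alpha\lambda}/(1-\beta\lambda)$ on $(1/\beta,\infty)$ directly. Instead you use $b(\lambda)=1/g(-\lambda)$, which turns the nabla problem into the delta one with ``$|\cdot|<1$'' and ``$|\cdot|>1$'' exchanged. This buys you two things:
\begin{itemize}
\item the theorem becomes a genuine corollary of the delta analysis;
\item it explains the mirrored shape of the two theorems, in particular why the delta instability window $(\lambda_{-1},\lambda_0)$ reappears as the nabla asymptotic-stability window $(\widehat{\lambda}_0,\widehat{\lambda}_{-1})$ to the right of $1/\beta$.
\end{itemize}
Your monotonicity analysis of $g$ also makes self-contained what the paper only cites.

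One bookkeeping fact you use implicitly and should state. For $u\in(-e^{-1},0)$ one has $W_{-1}(u)<-1<W_0(u)<0$, so $\lambda_{-1}<\mu^*<\lambda_0<-1/\beta$. Hence $1/\beta<\widehat{\lambda}_0<\widehat{\lambda}_{-1}$. This ordering is what identifies $(\widehat{\lambda}_0,\widehat{\lambda}_{-1})$ as precisely the set where $g(-\lambda)<-1$.
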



\begin{remark}\label{remark26}
This is the nabla version of Remark \ref{remark16}. Let $\lambda\in\R\backslash\left\{\frac{1}{\beta}\right\}$. Fix the gap size $\beta>\frac{\alpha}{W_0(e^{-1})}$, as above in Theorem \ref{realnablathm} (iii), and let 
$\alpha$ tend to 0. As mentioned in Remark \ref{remark16}, 
$$ \lim_{\alpha\rightarrow 0}\p_{\alpha,\beta} = \beta\Z. $$
For $\alpha=0$, we see that $(1-\beta\lambda)=-1$ at 
$$ \lim_{\alpha\rightarrow 0}\widehat{\lambda}_{0}=\lim_{\alpha\rightarrow 0}\left(\frac{1}{\beta}-\frac{1}{\alpha}W_{0}\left(-\frac{\alpha}{\beta}e^{\frac{\alpha}{\beta}}\right)\right)=\frac{2}{\beta}, $$
and that $\displaystyle\lim_{\alpha\rightarrow 0}\widehat{\lambda}_{-1}=\infty$. Thus, with $\alpha=0$, we have the following subcases from 
Theorem \ref{realnablathm} (iii).
\begin{enumerate}	
\item[(a)]  If $\lambda\in(-\infty,0)\cup\left(\frac{2}{\beta},\infty\right)$, then the trivial solution of \eqref{nabeq} is asymptotically stable. 
\item[(b)] If $\lambda=0$ or $\lambda=\frac{2}{\beta}$, then the trivial solution of \eqref{nabeq} is stable.
\item[(c)] If $\lambda\in\left(0,\frac{1}{\beta}\right)\cup\left(\frac{1}{\beta},\; \frac{2}{\beta}\right)$, 
\end{enumerate}
then the trivial solution of \eqref{nabeq} is unstable.			
\end{remark}


\section{Complex Eigenvalues and Equilibrium Stability}

We extend the eigenvalues under consideration in this section to $\lambda\in\C\backslash\left\{-\frac{1}{\beta}\right\}$ for continuous interval size $\alpha>0$ and discrete gap size $\beta>0$ on the time scale $\p_{\alpha,\beta}$. To further motivate our use of the Lambert $W$ function, consider the exponential function given in \eqref{pabexp0}. Set the base of the exponential function as follows, $(1+\beta\lambda)e^{\alpha\lambda}=\rho e^{i\phi}$, for $\rho >0$, $i=\sqrt{-1}$, and $\phi\in(-\pi,\pi]$. Let $w=\frac{\alpha}{\beta}+\alpha\lambda$. Then, using the various branches in the complex plane of the Lambert $W$ function determined by $z\in\Z$, 
we have
\begin{equation}\label{complexlam}
 \lambda = -\frac{1}{\beta}+\frac{1}{\alpha}W_z\left(\frac{\rho \alpha}{\beta}e^{\frac{\alpha}{\beta}+i\phi}\right), \quad \phi\in(-\pi,\pi], \quad \rho >0, \quad \beta>0,
\end{equation} 
for $\phi\in(-\pi,\pi]$, with a branch cut along the negative real axis, where the principal branch is denoted $W_0$. 


\begin{theorem}[Delta equation]\label{complexTHM}
Let $(1+\beta\lambda)e^{\alpha\lambda}=\rho e^{i\phi}$, for $\rho >0$ and $\phi\in(-\pi,\pi]$, so that $\lambda\in\C\backslash\left\{-\frac{1}{\beta}\right\}$ is given by \eqref{complexlam}, where we have used the Lambert $W$ function denoted by $W_z$ for any $z\in\Z$. Then, the following hold.
\begin{enumerate}
\item[(a)] If $0<\rho<1$, then the trivial solution of \eqref{maineq} is asymptotically stable.
\item[(b)] If $\rho =1$, then the trivial solution of \eqref{maineq} is stable.
\item[(c)] If $\rho>1$, then the trivial solution of \eqref{maineq} is unstable.
\end{enumerate}
\end{theorem}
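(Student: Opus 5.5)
The plan is to work directly with the explicit general solution \eqref{xform}, $x(t)=x_0e_\lambda(t,0)$, using the factored form \eqref{pabexp0}. For $t=k(\alpha+\beta)+j$ with $k\in\N_0$ and $j\in[0,\alpha]$, this gives
\[ |x(t)| = |x_0|\,\rho^k\,|e^{\lambda j}| = |x_0|\,\rho^k e^{j\,\mathrm{Re}\,\lambda}. \]
The factor $e^{j\,\mathrm{Re}\,\lambda}$ depends only on the position within a continuous interval. Since $j$ ranges over the compact set $[0,\alpha]$, it is bounded above and below by positive constants independent of $k$:
\[ m:=\min\{1,e^{\alpha\,\mathrm{Re}\,\lambda}\}\le e^{j\,\mathrm{Re}\,\lambda}\le M:=\max\{1,e^{\alpha\,\mathrm{Re}\,\lambda}\}. \]
Hence $m|x_0|\rho^k\le |x(t)|\le M|x_0|\rho^k$ for all $t\in\p_{\alpha,\beta}$. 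Every case then reduces to the behavior of the geometric sequence $\rho^k$. The representation \eqref{complexlam} via $W_z$ is needed only to identify which $\lambda$ correspond to a given $(\rho,\phi)$. Its correctness follows from substituting $w=\frac{\alpha}{\beta}+\alpha\lambda$: the relation $(1+\beta\lambda)e^{\alpha\lambda}=\rho e^{i\phi}$ becomes $we^{w}=\frac{\rho\alpha}{\beta}e^{\alpha/\beta+i\phi}$. The stability argument itself uses only $\rho$.

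\textbf{Case (a), $0<\rho<1$.} Given $\varepsilon>0$, take $\delta=\varepsilon/M$. Then $|x(t)|\le M|x_0|\rho^k<\varepsilon$, which gives stability. Since $t\to\infty$ forces $k\to\infty$ and $\rho^k\to0$, we also get $|x(t)|\to0$ for any $x_0$, so the trivial solution is asymptotically stable.

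\textbf{Case (b), $\rho=1$.} The same choice $\delta=\varepsilon/M$ gives $|x(t)|\le M|x_0|<\varepsilon$, so the trivial solution is stable.

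\textbf{Case (c), $\rho>1$.} Take any $x_0\neq0$, however small. At the left endpoints $t=k(\alpha+\beta)$ we have $|x(t)|=|x_0|\rho^k\to\infty$. So for $\varepsilon=1$, say, no $\delta$ works, and the trivial solution is unstable.

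The only mild obstacle is making sure the within-interval factor $e^{\lambda j}$ cannot spoil the bounds when $\mathrm{Re}\,\lambda>0$ while $\rho\le1$. The uniform constant $M$, which is finite because $j\le\alpha$, handles this. Apart from that, the proof is a direct transcription of the real-case arguments of Theorem \ref{reallambdathm}, with $|(1+\beta\lambda)e^{\alpha\lambda}|=\rho$ in place of the real base.
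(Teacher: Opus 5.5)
Your proof is correct and follows essentially the same route as the paper: write $|x(t)|=|x_0|\rho^k e^{j\operatorname{Re}(\lambda)}$ for $t=k(\alpha+\beta)+j$ and reduce every case to the behavior of $\rho^k$, with your explicit $\delta=\varepsilon/M$ making the stability estimates slightly more complete than the paper's. Where the paper simply asserts $\operatorname{Re}(\lambda)\le 0$ when $\rho=1$, you use the uniform bound $M=\max\{1,e^{\alpha\operatorname{Re}(\lambda)}\}$; the case you guard against cannot actually occur, since $\operatorname{Re}(\lambda)>0$ forces $|1+\beta\lambda|e^{\alpha\operatorname{Re}(\lambda)}\ge(1+\beta\operatorname{Re}(\lambda))e^{\alpha\operatorname{Re}(\lambda)}>1$, so $\rho\le 1$ implies $\operatorname{Re}(\lambda)\le 0$.
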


\begin{proof}
Let $\lambda\in\C\backslash\left\{-\frac{1}{\beta}\right\}$ be given as in \eqref{complexlam}, and let the Lambert $W$ function be denoted $W_z$ for any $z\in\Z$. 

Part (a).
First, let $\rho\in(0,1)$, and $\lambda=-\frac{1}{\beta}+\frac{1}{\alpha}W_z\left(\frac{\rho \alpha}{\beta}e^{\frac{\alpha}{\beta}+i\phi}\right)$ for $\phi\in(-\pi,\pi]$. Given the exponential function as in \eqref{pabexp0}, for $t=k(\alpha+\beta)+j$ and $j\in[0,\alpha]$, we then have 
\begin{equation}\label{thm31eq1}
 e_{\lambda}(t,0)=\left[(1+\beta\lambda)e^{\alpha\lambda}\right]^{k}e^{\lambda j}=\left(\rho e^{i\phi}\right)^{k}e^{\lambda j}. 
\end{equation}
Since $c e_{\lambda}(t,0)$ is a general solution to \eqref{maineq} for any $c\in\C$, clearly $c e_{\lambda}(t,0)\rightarrow 0$ as $t\rightarrow\infty$, making the trivial solution of \eqref{maineq} asymptotically stable when $\rho\in(0,1)$.

Part (b). If $\rho=1$, then $\lambda=-\frac{1}{\beta}+\frac{1}{\alpha}W_z\left(\frac{\alpha}{\beta}e^{\frac{\alpha}{\beta}+i\phi}\right)$ for $\phi\in(-\pi,\pi]$ and for fixed $z\in\Z$. Let the exponential function be given by \eqref{pabexp0}. Then, for $t=k(\alpha+\beta)+j\in[k(\alpha+\beta),k(\alpha+\beta)+\alpha]$ and $j\in[0,\alpha]$, 
\begin{eqnarray*}
 e_{\lambda}(t,0) &=& \left[(1+\beta\lambda)e^{\alpha\lambda}\right]^ke^{j\lambda} = e^{j\lambda+ik\phi}.
\end{eqnarray*}
Note that, for all $j\in[0,\alpha]$ and $\phi\in(-\pi,\pi]$, and for any fixed $z\in\Z$, the real part of $\lambda$ satisfies $\operatorname{Re}(\lambda)\le 0$, so that
\[ |e_{\lambda}(t,0)| = e^{j\operatorname{Re}(\lambda)}\in\left[e^{\alpha\operatorname{Re}(\lambda)},\;1\right]. \]
Consequently, with $\rho=1$, the trivial solution of \eqref{maineq} is stable.

Part (c). Let $\rho>1$, that is, let $\lambda=-\frac{1}{\beta}+\frac{1}{\alpha}W_z\left(\frac{\rho \alpha}{\beta}e^{\frac{\alpha}{\beta}+i\phi}\right)$, for $\phi\in(-\pi,\pi]$ and $z\in\Z$. By \eqref{thm31eq1} and the fact that $\rho>1$, we have that the trivial solution of \eqref{maineq} is unstable, as $|e_{\lambda}(t,0)|\rightarrow\infty$ as $t\rightarrow\infty$.
\end{proof}


\begin{figure}\label{fig1}
  \centering
		\includegraphics[scale=0.5]{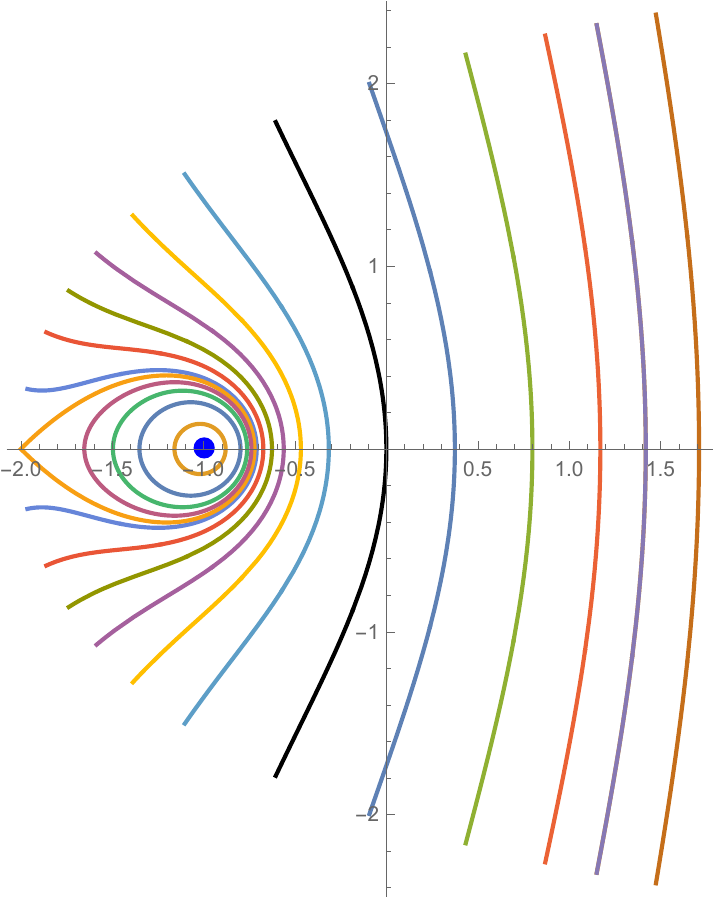}
		  \caption{Parametrized graphs of $\lambda=-1+W_0(\rho e^{1+i\phi})$ for various values of $\rho>0$, plotted for $\phi\in(-\pi,\pi]$, with $\alpha=\beta=1$. The black curve corresponds to $\rho=1$ and a stable manifold, where the trivial solution of \eqref{maineq} is stable. The curves to the left of the black curve are for $0<\rho <1$, and to the right for $\rho>1$. Note the homoclinic bifurcation at $\rho=\frac{-1}{e^2}$, including $\lambda=-2$. The blue dot is a hole in the complex plane for $\lambda=-1$.}
\end{figure}


\begin{remark}
Consider the nabla case with $\lambda\in\C\backslash\left\{\frac{1}{\beta}\right\}$ on the time scale $\p_{\alpha,\beta}$, for continuous interval size $\alpha>0$ and discrete gap size $\beta>0$. With the nabla exponential function given in \eqref{nablaexpo}, set the base of the exponential function as follows, $(1-\beta\lambda)^{-1}e^{\alpha\lambda}=\rho e^{i\phi}$, for $\rho>0$ and $\phi\in[0,2\pi)$. Again, for various branches of the Lambert $W$ function in the complex plane determined by $z\in\Z$, for $\phi\in[0,2\pi)$, we can solve for $\lambda$ to obtain
\begin{equation}\label{complexnab}
 \lambda = \frac{1}{\beta}-\frac{1}{\alpha}W_z\left(\frac{\alpha}{\rho\beta}e^{\frac{\alpha}{\beta}-i\phi}\right), \quad \phi\in[0,2\pi), \quad \rho >0, \quad \beta>0,
\end{equation} 
with a branch cut along the positive real axis, and principal branch denoted $W_0$. Then,
\begin{equation}\label{thm33eq1}
 \widehat{e}_{\lambda}(t,0)=\left(\frac{e^{\alpha\lambda}}{1-\beta\lambda}\right)^{k}e^{\lambda j}=\left(\rho e^{i\phi}\right)^{k}e^{\lambda j}, \quad j\in[0,\alpha],
\end{equation}
and we may now compare Theorem \ref{complexTHM} with the following theorem.
\end{remark}


\begin{theorem}[Nabla equation]\label{nabTHM}
Let $\lambda\in\C\backslash\left\{\frac{1}{\beta}\right\}$ have the form \eqref{complexnab}, where $W_z$ is the Lambert $W$ function for any $z\in\Z$. 
\begin{enumerate}
\item[(a)] If $0<\rho<1$, then the trivial solution of \eqref{nabeq} is asymptotically stable. 
\item[(b)] If $\rho=1$, then the trivial solution of \eqref{nabeq} is stable.
\item[(c)] If $\rho>1$, then the trivial solution of \eqref{nabeq} is unstable. 
\end{enumerate}
\end{theorem}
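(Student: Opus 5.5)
The plan is to mirror the proof of Theorem \ref{complexTHM}, working directly from the representation \eqref{thm33eq1} of the nabla exponential. First I will check that \eqref{complexnab} really produces every $\lambda\ne\frac1\beta$ with base $(1-\beta\lambda)^{-1}e^{\alpha\lambda}=\rho e^{i\phi}$. Put $w=\frac{\alpha}{\beta}-\alpha\lambda$. Then $1-\beta\lambda=\frac{\beta}{\alpha}w$ and $e^{\alpha\lambda}=e^{\alpha/\beta}e^{-w}$. The defining relation therefore becomes $we^{w}=\frac{\alpha}{\rho\beta}e^{\frac{\alpha}{\beta}-i\phi}$, so $w=W_z(\cdot)$ for some branch $z\in\Z$. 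Hence every admissible $\lambda$ has the form \eqref{complexnab}, and by \eqref{thm33eq1} the general solution of \eqref{nabeq} is $x(t)=x_0\widehat{e}_\lambda(t,0)=x_0(\rho e^{i\phi})^ke^{\lambda j}$ for $t=k(\alpha+\beta)+j$ and $j\in[0,\alpha]$.

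The key estimate is $|x(t)|=|x_0|\rho^k e^{j\operatorname{Re}\lambda}$. Since $j$ ranges over the compact interval $[0,\alpha]$, the factor $e^{j\operatorname{Re}\lambda}$ lies between $m=\min\{1,e^{\alpha\operatorname{Re}\lambda}\}$ and $M=\max\{1,e^{\alpha\operatorname{Re}\lambda}\}$. Both are positive constants depending only on $\lambda$, and as $t\to\infty$ we have $k\to\infty$.

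The three cases then follow from this estimate.
\begin{itemize}
\item[(a)] If $0<\rho<1$, then $|x(t)|\le |x_0|M\rho^k\le |x_0|M\to0$. Taking $\delta=\varepsilon/M$ gives stability, and the decay gives asymptotic stability.
\item[(b)] If $\rho=1$, then $|x(t)|\le M|x_0|$, so $\delta=\varepsilon/M$ gives stability. Unlike the delta proof, I will not claim that $\operatorname{Re}\lambda\le0$; the bound with $M$ avoids needing the sign at all.
\item[(c)] If $\rho>1$, then for any $x_0\ne0$, evaluating at $t=k(\alpha+\beta)$ gives $|x(t)|=|x_0|\rho^k\to\infty$. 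So no $\delta$ can work for, say, $\varepsilon=1$, and the trivial solution is unstable.
\end{itemize}

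The main obstacle is the bookkeeping in \eqref{thm33eq1}, which is only stated, not derived. I need to verify that on $[k(\alpha+\beta),k(\alpha+\beta)+\alpha]$ the function \eqref{nablaexpo} equals $\left(\frac{e^{\alpha\lambda}}{1-\beta\lambda}\right)^ke^{\lambda j}$. Writing $t=k(\alpha+\beta)+j$ gives $e^{\lambda t}=e^{k\alpha\lambda}e^{k\beta\lambda}e^{\lambda j}$, and the $e^{k\beta\lambda}$ cancels against the denominator of \eqref{nablaexpo}. Once that identity is confirmed, the rest is the elementary estimate above.
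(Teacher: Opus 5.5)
Your proof is correct and takes the approach the paper intends: the paper gives no separate argument for Theorem \ref{nabTHM}, and instead relies on carrying the proof of Theorem \ref{complexTHM} over through \eqref{thm33eq1}, which is exactly your modulus computation $|x(t)|=|x_0|\rho^k e^{j\operatorname{Re}\lambda}$. Bounding $e^{j\operatorname{Re}\lambda}$ by $M=\max\{1,e^{\alpha\operatorname{Re}\lambda}\}$ in part (b) is a worthwhile safeguard: in the nabla case $\rho=1$ actually forces $\operatorname{Re}\lambda\ge 0$ (as the paper notes in the proof of Theorem \ref{complexTHMHUS}), so copying the delta sign claim $\operatorname{Re}\lambda\le 0$ would have been false.
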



\section{Nabla Equation and Hyers--Ulam Stability}

The regions identified earlier in this work also arise when discussing another type of stability, that of Hyers--Ulam stability. Consequently, in this section we extend the analysis to the related but separate consideration of the Hyers--Ulam stability (HUS) of the nabla equation \eqref{nabeq}. Ulam \cite{ulam} initiated this type of discussion, followed by Hyers \cite{hyers} and Rassias \cite{rassias}. Similar results for the delta equation \eqref{maineq} are found in \cite{and}.

\begin{definition}[Hyers--Ulam Stability]
We say that \eqref{nabeq} has Hyers--Ulam stability on $\T$ if and only if given an arbitrary $\varepsilon>0$ and any function $\phi:\T\rightarrow\C$ satisfying 
\begin{equation}\label{phiineq}
 |\phi^\nabla(t)-\lambda\phi(t)|\le\varepsilon,  \quad t\in\T, 
\end{equation}
then there exists a solution $x:\T\rightarrow\C$ of \eqref{nabeq} and a constant $K>0$ such that $|\phi(t)-x(t)|\le K\varepsilon$ for all $t\in\T$. In this case, such a constant $K$ is called an HUS constant for \eqref{nabeq} on $\T$.
\end{definition}

Let
\begin{equation}\label{Krealnab}
 \widehat{K}_{\R}=\frac{\beta\lambda-1+e^{\alpha\lambda}(1-2\beta\lambda)}{\lambda\left(\beta\lambda-1-e^{\alpha\lambda}\right)}.
\end{equation}
The following result appeared in \cite[Theorem 2.5]{and}. It is included here for completeness and easy reference.


\begin{theorem}[Nabla equation]\label{realnablaHUS}
Fix $\alpha>0$, and let $\lambda\in\R\backslash\left\{\frac{1}{\beta}\right\}$. Also, let $\widehat{K}_{\R}$ be given as in \eqref{Krealnab}. We have the following cases.
\begin{enumerate}
 \item Suppose $0<\beta<\frac{\alpha}{W_0(e^{-1})}$.
  \begin{enumerate}
    \item If $\lambda\in(-\infty,0)\cup\left(0,\frac{1}{\beta}\right)$, then \eqref{nabeq} is Hyers--Ulam stable, with best HUS constant 
		      $K=\frac{1}{|\lambda|}$. 
    \item If $\lambda=0$, then \eqref{nabeq} is not Hyers--Ulam stable. 
    \item If $\lambda\in\left(\frac{1}{\beta},\infty\right)$, then \eqref{nabeq} is HUS, with best HUS constant $K=\widehat{K}_{\R}$.
  \end{enumerate}
\item Suppose $\beta=\frac{\alpha}{W_0(e^{-1})}$. Then, $(1-\beta\lambda)^{-1}e^{\alpha\lambda}=-1$ at $\lambda=\frac{1}{\alpha}\left(1+W_0(e^{-1})\right)$, and we have the following subcases.
\begin{enumerate}
    \item If $\lambda\in(-\infty,0)\cup\left(0,\frac{W_0(e^{-1})}{\alpha}\right)$, then \eqref{nabeq} is HUS, with best HUS constant 
          $K=\frac{1}{|\lambda|}$. 
    \item If $\lambda=0$ or $\lambda=\frac{1}{\alpha}\left(1+W_0(e^{-1})\right)$, then \eqref{nabeq} is not HUS.
    \item If $\lambda\in\left(\frac{W_0(e^{-1})}{\alpha},\;\frac{1}{\alpha}\left(1+W_0(e^{-1})\right)\right)\cup\left(\frac{1}{\alpha}\left(1+W_0(e^{-1})\right),\;\infty\right)$, then \eqref{nabeq} is HUS, with best HUS constant $K=\widehat{K}_{\R}$ as in \eqref{Krealnab}.
\end{enumerate}
\item Suppose $\beta>\frac{\alpha}{W_0(e^{-1})}$. Then, $(1-\beta\lambda)^{-1}e^{\alpha\lambda}=-1$ at 
      $$ \widehat{\lambda}_{-1}:=\frac{1}{\beta}-\frac{1}{\alpha}W_{-1}\left(-\frac{\alpha}{\beta}e^{\frac{\alpha}{\beta}}\right) \quad\text{and}\quad   
			\widehat{\lambda}_{0}:=\frac{1}{\beta}-\frac{1}{\alpha}W_{0}\left(-\frac{\alpha}{\beta}e^{\frac{\alpha}{\beta}}\right), $$ 
			and we have the following subcases.
\begin{enumerate}
\item If $\lambda\in(-\infty,0)\cup\left(0,\frac{1}{\beta}\right)$, then \eqref{nabeq} is HUS, with best HUS constant $K=\frac{1}{|\lambda|}$. 
\item If $\lambda=0$, $\lambda=\widehat{\lambda}_{0}$, or $\lambda=\widehat{\lambda}_{-1}$, then \eqref{nabeq} is not HUS.
\item If $\lambda\in\left(\frac{1}{\beta},\; \widehat{\lambda}_{0}\right)\cup\left(\widehat{\lambda}_{0},\widehat{\lambda}_{-1}\right)\cup\left(\widehat{\lambda}_{-1},\; \infty\right)$, then \eqref{nabeq} is HUS, with best HUS constant $K=\left|\widehat{K}_{\R}\right|$ as in \eqref{Krealnab}.
\end{enumerate}
\end{enumerate}
\end{theorem}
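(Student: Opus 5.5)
The plan is to reduce Hyers--Ulam stability of \eqref{nabeq} to a uniform bound on a Green's function built from the nabla exponential \eqref{nablaexpo}. The stability regions are governed by the one-period base
\[
B:=\frac{\widehat{e}_\lambda(t+\alpha+\beta,0)}{\widehat{e}_\lambda(t,0)}=\frac{e^{\alpha\lambda}}{1-\beta\lambda},
\]
exactly as in Theorem \ref{realnablathm}. Given $\varepsilon>0$ and $\phi$ satisfying \eqref{phiineq}, set $f:=\phi^\nabla-\lambda\phi$, so that $|f|\le\varepsilon$. Variation of constants on $\p_{\alpha,\beta}$ then gives
\[
\phi(t)=\widehat{e}_\lambda(t,0)\Big(\phi(0)+\int_0^t \widehat{e}_\lambda(0,\rho(s))f(s)\nabla s\Big).
\]
This representation is valid because $\lambda\ne\frac1\beta$ makes $\lambda$ $\nu$-regressive. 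The nabla integral splits into ordinary integrals over the intervals $(k(\alpha+\beta),k(\alpha+\beta)+\alpha]$ and point masses of weight $\beta$ at the right endpoints of the gaps.

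\textbf{Case $|B|<1$.} This holds for $\lambda<0$, and for $\lambda\in(\widehat\lambda_0,\widehat\lambda_{-1})$ in case (iii). Here I will take $x(t)=\phi(0)\widehat{e}_\lambda(t,0)$. Then
\[
|\phi(t)-x(t)|\le\varepsilon\sup_t\int_0^t\big|\widehat{e}_\lambda(t,\rho(s))\big|\nabla s,
\]
and this sum converges as a geometric series in $|B|$.

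\textbf{Case $|B|>1$.} This covers $\lambda\in(0,\frac1\beta)$ and the remaining intervals with $\lambda>\frac1\beta$. Here I will take $x(t)=\widehat{e}_\lambda(t,0)\big(\phi(0)+\int_0^\infty\widehat{e}_\lambda(0,\rho(s))f(s)\nabla s\big)$. The improper integral converges because $|\widehat e_\lambda(0,\cdot)|$ decays geometrically. The error is then governed by the tail $\int_t^\infty|\widehat{e}_\lambda(t,\rho(s))|\nabla s$. In both cases an HUS constant is
\[
K(\lambda)=\sup_t\int|\widehat{e}_\lambda(t,\rho(s))|\nabla s,
\]
taken over the appropriate range of $s$. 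This constant is best possible, since choosing $f=\varepsilon\,\overline{\operatorname{sgn}}$ of the kernel attains the supremum.

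The next step is to evaluate $K(\lambda)$. When $1-\beta\lambda>0$, that is $\lambda<\frac1\beta$, every factor $e^{\lambda(\cdot)}$ and $(1-\beta\lambda)^{-1}$ is positive, so the kernel has one sign. The integral of the absolute value is then the same as the integral of the kernel itself. That integral is the bounded solution of $y^\nabla=\lambda y+1$ (or its tail analogue), namely the constant $-\frac1\lambda$. This gives $K=\frac1{|\lambda|}$ with no series computation, settling (i)(a), (ii)(a) and (iii)(a).

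When $\lambda>\frac1\beta$, the gap factor $(1-\beta\lambda)^{-1}$ is negative, so the kernel alternates in sign from period to period. Summing absolute values over periods gives a geometric series in $|B|^{-1}$ (here $|B|>1$) plus a within-period piece. I expect this evaluation to be the main obstacle. One has to sum the series explicitly, locate the $t$ within a period (a left endpoint or a gap point) at which the supremum is attained, and simplify the result to $\widehat K_{\R}$ in \eqref{Krealnab}, taking absolute values in case (iii) where the sign of the denominator $\beta\lambda-1-e^{\alpha\lambda}$ changes. I would first compute at $t=k(\alpha+\beta)$ and then check monotonicity of the within-period expression in $j\in[0,\alpha]$.

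\textbf{Case $|B|=1$.} This covers $\lambda=0$, $\widehat\lambda_0$, $\widehat\lambda_{-1}$ and the critical point in (ii). I will prove non-HUS by resonance. Take $f(t)=\varepsilon\,\widehat e_\lambda(t,0)/|\widehat e_\lambda(t,0)|$ when $B=1$, adjusted so that each period contributes the same signed amount when $B=-1$. Then $\phi(t)/\widehat e_\lambda(t,0)$ grows linearly in $k$, while every solution $x$ is a bounded multiple of $\widehat e_\lambda$, whose modulus is periodic. Hence $|\phi-x|$ is unbounded and no $K$ exists. The case boundaries in (i)--(iii) are the same sign analysis of $B\mp1$ via the Lambert $W$ branches already carried out for Theorem \ref{realnablathm}.
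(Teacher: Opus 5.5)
Your proposal is correct. The paper does not prove this statement itself; it quotes it from \cite[Theorem 2.5]{and}. Your argument nevertheless uses the same variation-of-constants machinery the paper applies to the complex analogue (Theorem \ref{complexTHMHUS}) and in its final reduction theorem:
\begin{itemize}
\item the representation \eqref{phieqform};
\item the choice $x_0=\phi_0+\int_0^\infty q(s)/\widehat{e}_\lambda(\rho(s),0)\,\nabla s$ when the base has modulus larger than $1$;
\item geometric summation over periods, leading to \eqref{KRbigger1} and \eqref{Krsmaller1};
\item evaluation at $j=\alpha$ to recover $\widehat{K}_{\R}$;
\item a resonance construction when the modulus is $1$. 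There the paper uses the explicit $\phi=\varepsilon t\,\widehat{e}_\lambda(t,0)/e^{\alpha\operatorname{Re}(\lambda)}$ rather than your sign-aligned forcing.
\end{itemize}

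Your version adds three things the paper's complex theorem lacks:
\begin{itemize}
\item an optimality argument via extremal forcing, whereas the paper only gives upper bounds;
\item the simpler choice $x_0=\phi(0)$ when $|B|<1$, whereas the paper shifts $x_0$ by a constant, which only enlarges the bound;
\item the positivity shortcut that gives $\frac{1}{|\lambda|}$ for $\lambda<\frac{1}{\beta}$ without any series.
\end{itemize}

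When you carry out the $\lambda>\frac{1}{\beta}$ computation, set $R=|B|$ and use $Re^{-\alpha\lambda}(\beta\lambda-1)=1$. The within-period quantity then becomes:
\begin{itemize}
\item $\frac{R-1+2e^{\lambda j}}{\lambda(R-1)}$ for the tail version, when $R>1$;
\item $\frac{R-1+2e^{\lambda j}}{\lambda(1-R)}$ for the $k\to\infty$ limit of the $\int_0^t$ version, when $R<1$.
\end{itemize}
Both increase in $j$. So the supremum sits at the right-scattered point $t=k(\alpha+\beta)+\alpha$, not at $t=k(\alpha+\beta)$, and there the two expressions equal $\widehat{K}_{\R}$ and $|\widehat{K}_{\R}|$ respectively.

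When $|B|<1$, the supremum is not attained but only approached as $k\to\infty$, because the partial sums increase in $k$. For example, for $\lambda<0$ the integral equals $(1-\widehat{e}_\lambda(t,0))/|\lambda|$, not the constant $-\frac{1}{\lambda}$. Your optimality step should therefore be a $\limsup$ argument: since $\widehat{e}_\lambda(t,0)\to 0$, no choice of $x_0$ can lower that limit. This is more accurate than saying the extremal forcing ``attains the supremum.''
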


Next, we extend the considered values of the eigenvalue to $\lambda\in\C\backslash\left\{\frac{1}{\beta}\right\}$ on the time scale $\p_{\alpha,\beta}$ for continuous interval size $\alpha>0$ and discrete jump size $\beta>0$. As in the previous section, consider the exponential function given in 
\eqref{thm33eq1}. Again, set the base of the exponential function as follows, $(1-\beta\lambda)^{-1}e^{\alpha\lambda}=\rho e^{i\theta}$, for $\rho>0$, $i=\sqrt{-1}$, and $\theta\in[0,2\pi)$. Let $w=\frac{\alpha}{\beta}-\alpha\lambda$. Then, $\lambda$ is given by \eqref{complexnab}, for various branches of the Lambert $W$ function in the complex plane determined by $z\in\Z$, for $\theta\in[0,2\pi)$, with a branch cut along the negative real axis, and principal branch $W_0$. 

The following theorem is an improvement on \cite[Theorem 3.6]{and}.


\begin{theorem}\label{complexTHMHUS}
Let $\lambda\in\C\backslash\left\{\frac{1}{\beta}\right\}$ have the form \eqref{complexnab}, and let $W_z$ be the Lambert $W$ function for any $z\in\Z$. 
\begin{enumerate}
\item If $\rho=1$, then \eqref{nabeq} is not Hyers--Ulam stable.
\item If $\rho>1$, then \eqref{nabeq} is Hyers--Ulam stable, with HUS constant at most 
\begin{equation}\label{KRbigger1}
 K_{\rho>1}:=\max_{j\in[0,\alpha]}\left(\frac{\rho-1+e^{j\operatorname{Re}(\lambda)}+Re^{(j-\alpha)\operatorname{Re}(\lambda)}\left(\beta\operatorname{Re}(\lambda)-1\right)}{(\rho-1)\operatorname{Re}(\lambda)}\right), 
\end{equation} 
or $K=\frac{\rho(\beta+\alpha)}{\rho-1}$ if $\operatorname{Re}(\lambda)=0$. 
\item If $0<\rho<1$, then \eqref{nabeq} is Hyers--Ulam stable, with HUS constant at most  
\begin{equation}\label{Krsmaller1}
 K_{0<\rho<1}:= \max_{j\in[0,\alpha]}\left(\frac{\rho-1+e^{j\operatorname{Re}(\lambda)}+Re^{(j-\alpha)\operatorname{Re}(\lambda)}\left(\beta\operatorname{Re}(\lambda)-1\right)}{(1-\rho)\operatorname{Re}(\lambda)}\right). 
\end{equation}
\end{enumerate}
\end{theorem}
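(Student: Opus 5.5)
We argue directly from the variation-of-constants representation of solutions of the perturbed nabla equation on $\p_{\alpha,\beta}$. Given $\varepsilon>0$ and $\phi$ satisfying \eqref{phiineq}, set $f(t):=\phi^\nabla(t)-\lambda\phi(t)$, so $|f|\le\varepsilon$. We then write $\phi$ explicitly: on each continuous piece $(k(\alpha+\beta),k(\alpha+\beta)+\alpha]$, $\phi'=\lambda\phi+f$ gives $\phi(t)=e^{\lambda j}\phi(t_k)+\int_0^j e^{\lambda(j-s)}f(t_k+s)\,ds$, where $t=t_k+j$ and $t_k=k(\alpha+\beta)$. At the left-scattered points $t_k$, the jump relation $(1-\beta\lambda)\phi(t_k)=\phi(t_k-\beta)+\beta f(t_k)$ holds. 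Composing these gives the one-period recursion
\[ \phi(t_{k+1})=\rho e^{i\theta}\phi(t_k)+g_k, \qquad \rho e^{i\theta}=\frac{e^{\alpha\lambda}}{1-\beta\lambda}, \]
where $g_k$ collects the forcing over one period:
\[ g_k=\frac{1}{1-\beta\lambda}\Big(\int_0^\alpha e^{\lambda(\alpha-s)}f\,ds+\beta f(t_{k+1})\Big). \]

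\textbf{Case (i), $\rho=1$.} We exhibit a bad perturbation. Take $f(t)=\varepsilon\,\widehat{e}_\lambda(t,0)/|\widehat{e}_\lambda(t,0)|$ (resonant forcing) and let $\phi$ solve $\phi^\nabla=\lambda\phi+f$, $\phi(0)=0$. Then $\phi(t)=\widehat e_\lambda(t,0)\cdot(\text{sum over periods of a positive quantity})$, and since $|\widehat e_\lambda|$ is bounded above and below by $\rho=1$ and \eqref{thm33eq1}, the accumulated sum grows linearly in $k$. Every solution $x=c\,\widehat e_\lambda$ stays bounded, so $|\phi-x|$ is unbounded and no $K$ exists.

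\textbf{Case (ii), $\rho>1$.} Solve the recursion backward. Choose
\[ x(t)=\widehat{e}_\lambda(t,0)\Big(\phi(0)+\sum_{m\ge0}(\rho e^{i\theta})^{-(m+1)}g_m\Big), \]
which converges since $\rho>1$. Then $\phi(t_k)-x(t_k)=-\sum_{m\ge k}(\rho e^{i\theta})^{k-m-1}g_m$, and inside each interval we add the integral term. Bounding $|g_m|$ and the continuous contributions by $\varepsilon$ times integrals of $e^{\operatorname{Re}(\lambda)(\cdot)}$ and summing the geometric series $\sum\rho^{-n}=1/(\rho-1)$ yields, for $t=t_k+j$, an estimate of the form $\varepsilon$ times the bracket in \eqref{KRbigger1}. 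Here $R$ denotes the modulus $|1-\beta\lambda|^{-1}$-type factor, which I would identify with the paper's $R$ while matching terms. Maximizing over $j\in[0,\alpha]$ gives $K_{\rho>1}$. The case $\operatorname{Re}(\lambda)=0$ follows as the limit $\operatorname{Re}\lambda\to0$, in which the integrals become lengths $\alpha$ and $\beta$, giving $\rho(\alpha+\beta)/(\rho-1)$.

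\textbf{Case (iii), $0<\rho<1$.} Take $x=\phi(0)\widehat e_\lambda$ and solve the recursion forward: $\phi(t_k)-x(t_k)=\sum_{m<k}(\rho e^{i\theta})^{k-1-m}g_m$. The same estimates with the series $\sum\rho^n=1/(1-\rho)$ give \eqref{Krsmaller1}.

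The main obstacle is the bookkeeping in cases (ii) and (iii): correctly tracking the intra-interval factor $e^{j\operatorname{Re}\lambda}$ versus $e^{(j-\alpha)\operatorname{Re}\lambda}$ and the jump factor, so that the bound collapses exactly to the displayed expression. In case (i), the remaining care is verifying that the resonant forcing truly accumulates without cancellation, which the phase choice in $f$ ensures.
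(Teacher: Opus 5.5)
Your choice of $x$ in case (ii) matches the paper's, since $\sum_{m\ge0}\mu^{-(m+1)}g_m=\int_0^\infty f(s)/\widehat e_\lambda(\rho(s),0)\,\nabla s$ with $\mu=\rho e^{i\theta}$. The gap is in the bookkeeping you deferred: estimating the way you describe does not produce \eqref{KRbigger1}.

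Put $r=\operatorname{Re}(\lambda)$ and $B=\frac{1-e^{-\alpha r}}{r}+\beta e^{-\alpha r}>0$. Since $|1-\beta\lambda|=\rho^{-1}e^{\alpha r}$, you get $|g_m|\le\varepsilon\rho B$. Note that $\rho>1$ forces $r>0$: if $r\le0$ then $e^{\alpha r}\le1\le1-\beta r\le|1-\beta\lambda|$. So the $\operatorname{Re}(\lambda)=0$ clause is vacuous and your limiting argument is unnecessary. For $t=t_k+j$,
\[ \phi(t)-x(t)=-e^{\lambda j}\sum_{m\ge k}\mu^{k-m-1}g_m+\int_0^j e^{\lambda(j-s)}f(t_k+s)\,ds . \]
Bounding the tail and the integral separately gives $\varepsilon\bigl[\frac{\rho e^{jr}B}{\rho-1}+\frac{e^{jr}-1}{r}\bigr]$. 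The bracket in \eqref{KRbigger1}, with $R$ read as $\rho$ (which is what the paper's proof uses, not $|1-\beta\lambda|^{-1}$), simplifies to $\varepsilon\bigl[\frac{\rho e^{jr}B}{\rho-1}-\frac{e^{jr}-1}{r}\bigr]$. Your expression exceeds this by $2\varepsilon(e^{jr}-1)/r>0$ for $j>0$, and it is increasing in $j$. Hence its maximum strictly exceeds $K_{\rho>1}$: you would prove HUS, but not with the claimed constant.

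The missing step is to cancel before taking moduli. Since $e^{\lambda j}\mu^{-1}g_k=\int_0^\alpha e^{\lambda(j-s)}f(t_k+s)\,ds+\beta e^{\lambda(j-\alpha)}f(t_{k+1})$,
\[ \phi(t)-x(t)=-\int_j^\alpha e^{\lambda(j-s)}f(t_k+s)\,ds-\beta e^{\lambda(j-\alpha)}f(t_{k+1})-e^{\lambda j}\sum_{m\ge k+1}\mu^{k-m-1}g_m . \]
This is the paper's $-\widehat e_\lambda(t,0)\int_t^\infty f(s)/\widehat e_\lambda(\rho(s),0)\,\nabla s$. Its modulus is at most $\varepsilon\bigl[\frac{1-e^{(j-\alpha)r}}{r}+\beta e^{(j-\alpha)r}+\frac{e^{jr}B}{\rho-1}\bigr]$, which is exactly the bracket in \eqref{KRbigger1}.

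Case (iii) works as you describe, and more cleanly than in the paper. With $x=\phi(0)\widehat e_\lambda$, the forward sum and the intra-interval integral do not overlap. Using $\sum_{n=0}^{k-1}\rho^n\le\frac{1}{1-\rho}$ gives $\varepsilon\bigl[\frac{\rho e^{jr}B}{1-\rho}+\frac{e^{jr}-1}{r}\bigr]$, which is precisely the bracket in \eqref{Krsmaller1}. The paper's shifted choice $x_0=\phi_0-\varepsilon C$ reaches the same bound only because $C$ absorbs the $\rho^{-k}$ terms.

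In case (i) your conclusion is right but the stated reason is not. The continuous pieces contribute $\varepsilon\int_0^\alpha e^{-rs}\,ds>0$ per period. The jump at $t_{m+1}$, however, contributes $\varepsilon\beta/(1-\beta\lambda)$, which is non-real for $\lambda\notin\R$ and negative for real $\lambda>1/\beta$. So the summands are not positive, and the phase of $f$ does not rule out cancellation. What does hold is that every period adds the same number $c=\varepsilon\bigl(\frac{1-e^{-\alpha r}}{r}+\frac{\beta}{1-\beta\lambda}\bigr)$ (first term read as $\alpha$ when $r=0$), so $\phi(t_k)=kc\,\widehat e_\lambda(t_k,0)$. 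You then need $c\ne0$:
\begin{itemize}
\item when $\lambda\notin\R$, $\operatorname{Im}c\ne0$;
\item the real $\lambda$ with $\rho=1$ are $\lambda=0$, where $c=\varepsilon(\alpha+\beta)$;
\item and $\lambda>1/\beta$ with $e^{\alpha\lambda}=\beta\lambda-1$, where $c=-2\varepsilon/(\lambda(\beta\lambda-1))$.
\end{itemize}
The paper's test function $\phi(t)=\varepsilon t\,\widehat e_\lambda(t,0)e^{-\alpha r}$ avoids this check altogether.
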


\begin{proof}
Let $\lambda\in\C\backslash\left\{\frac{1}{\beta}\right\}$ have the form \eqref{complexnab}, and let $W_z$ be the Lambert $W$ function for any $z\in\Z$. 

Case (i). If $\rho=1$, then $\lambda=\frac{1}{\beta}-\frac{1}{\alpha}W_z\left(\frac{\alpha}{\beta}e^{\frac{\alpha}{\beta}-i\theta}\right)$ for $\theta\in[0,2\pi)$ and for fixed $z\in\Z$. Let the exponential function be given by \eqref{pabexp0}. Then, for $t=k(\alpha+\beta)+j\in[k(\alpha+\beta),k(\alpha+\beta)+\alpha]$ and $j\in[0,\alpha]$, 
\begin{eqnarray*}
 \widehat{e}_{\lambda}(t,0) &=& (1-\beta\lambda)^{-k}e^{k\alpha\lambda} e^{j\lambda} = e^{j\lambda+ik\theta}.
\end{eqnarray*}
Note that, for all $j\in[0,\alpha]$ and $\theta\in[0,2\pi)$, and for any fixed $z\in\Z$, the real part of $\lambda$ satisfies $\operatorname{Re}(\lambda)\ge 0$, and
\[ |\widehat{e}_{\lambda}(t,0) | = e^{j\operatorname{Re}(\lambda)}\in\left[1,\;e^{\alpha\operatorname{Re}(\lambda)}\right]. \]
So, with $\widehat{e}_{\lambda}(t,0)=e^{j\lambda+ik\theta}$ for $t=k(\alpha+\beta)+j$, $j\in[0,\alpha]$, and $\theta\in[0,2\pi)$, set 
\[ \phi(t)=\frac{\varepsilon t\widehat{e}_{\lambda}(t,0)}{e^{\alpha\operatorname{Re}(\lambda)}}. \]
Then, we have
$$ |\phi^{\nabla}(t)-\lambda \phi(t)| = \left|\frac{\varepsilon\lambda t\widehat{e}_{\lambda}(t,0)}{e^{\alpha\operatorname{Re}(\lambda)}}+\frac{\varepsilon e^{\rho}_{\lambda}(t,0)}{e^{\alpha\operatorname{Re}(\lambda)}}-\frac{\varepsilon\lambda t\widehat{e}_{\lambda}(t,0)}{e^{\alpha\operatorname{Re}(\lambda)}}\right| = \frac{\varepsilon}{e^{\alpha\operatorname{Re}(\lambda)}}|e^{\rho}_{\lambda}(t,0)|\le \varepsilon $$
implies that $\phi$ satisfies \eqref{phiineq}, so that
$$ |\phi(t)-x(t)| = \left|\frac{\varepsilon t\widehat{e}_{\lambda}(t,0)}{e^{\alpha\operatorname{Re}(\lambda)}}-x_0\widehat{e}_{\lambda}(t,0)\right| = |\widehat{e}_{\lambda}(t,0)|\left|\frac{\varepsilon t}{e^{\alpha\operatorname{Re}(\lambda)}} - x_0\right|\ge \left|\frac{\varepsilon t}{e^{\alpha\operatorname{Re}(\lambda)}} - x_0\right| \rightarrow\infty $$
for any possible initial condition $x_0$, meaning \eqref{nabeq} is not HUS for $\rho=1$, that is when $\lambda=\frac{1}{\beta}-\frac{1}{\alpha}W_z\left(\frac{\alpha}{\beta}e^{\frac{\alpha}{\beta}-i\theta}\right)$ for any $\theta\in[0,2\pi)$, $\beta>0$, and for any fixed $z\in\Z$.

Case (ii). Let $\rho>1$, that is, let $\lambda = \frac{1}{\beta}-\frac{1}{\alpha}W_z\left(\frac{\alpha}{\beta \rho}e^{\frac{\alpha}{\beta}-i\theta}\right)$, initially with $\operatorname{Re}(\lambda)\ne 0$, for $\theta\in[0,2\pi)$ and $z\in\Z$. Let the exponential function be given by \eqref{pabexp0}, and let $\phi$ satisfy \eqref{phiineq}. Then, $\phi$ has the form given by 
\begin{equation}\label{phieqform}
 \phi(t)=\phi_0\widehat{e}_{\lambda}(t,0)+\widehat{e}_{\lambda}(t,0) \int_0^t\frac{q(s)}{\widehat{e}_{\lambda}(\rho(s),0)}\nabla s, \quad |q(s)|\le\varepsilon\; \forall s\in\T,  
\end{equation}
and again,
$$ \int_0^{\infty} \frac{q(s)}{\widehat{e}_{\lambda}(\rho(s),0)}\nabla s $$
exists and is finite, as $|\widehat{e}_{\lambda}(t,0)|=\rho^ke^{j\operatorname{Re}(\lambda)}$ for $\rho>1$ and $t=k(\alpha+\beta)+j$, $j\in[0,\alpha]$. Note that 
$$ x(t) = x_0\widehat{e}_{\lambda}(t,0), \qquad x_0=\phi_0 + \int_0^{\infty} \frac{q(s)}{\widehat{e}_{\lambda}(\rho(s),0)}\nabla s $$
is a well-defined solution of \eqref{nabeq}.
Now, to integrate from $s=0$ to $s=t=k(\alpha+\beta)+j$ for some $k\in\{0,1,2,\ldots\}$ and $j\in[0,\alpha]$, we see that there are $k$ continuous intervals and $k$ gaps to integrate over, plus the final partial interval (continuous), so that
\begin{eqnarray}
 \int_{0}^{t} \frac{\nabla s}{|\widehat{e}_{\lambda}(\rho(s),0)|} 
  &=& \sum_{m=0}^{k-1}\left(\int_{m(\alpha+\beta)}^{m(\alpha+\beta)+\alpha}\frac{ds}{|\widehat{e}_{\lambda}(s,0)|}\right)+\sum_{m=1}^{k}\left(\int_{m(\alpha+\beta)-\beta}^{m(\alpha+\beta)}\frac{\nabla s}{|\widehat{e}_{\lambda}(\rho(s),0)|}\right)+\int_{k(\alpha+\beta)}^{t}\frac{ds}{|\widehat{e}_{\lambda}(s,0)|} \nonumber\\
	&=& \sum_{m=0}^{k-1}\left(\int_{0}^{\alpha}\frac{e^{-j\operatorname{Re}(\lambda)}}{\rho^{m}}dj\right)+\sum_{m=0}^{k-1}\frac{\beta}{\rho^me^{\alpha\operatorname{Re}(\lambda)}} +\int_{0}^{j}\frac{e^{-\ell\operatorname{Re}(\lambda)}}{\rho^{k}}d\ell \nonumber\\
	&=& \sum_{m=0}^{k-1}\frac{-1}{\rho^m\operatorname{Re}(\lambda)}e^{-j\operatorname{Re}(\lambda)}\bigg|_{j=0}^{\alpha}+\sum_{m=0}^{k-1}\frac{\beta}{\rho^me^{\alpha\operatorname{Re}(\lambda)}}+\frac{-1}{\rho^k\operatorname{Re}(\lambda)}e^{-\ell\operatorname{Re}(\lambda)}\bigg|_{\ell=0}^{j} \nonumber\\
	&=& \frac{\rho(\rho^k-1)(1-e^{-\alpha\operatorname{Re}(\lambda)}))}{\rho^k(\rho-1)\operatorname{Re}(\lambda)} + \frac{e^{-\alpha\operatorname{Re}(\lambda)}\beta \rho(\rho^k-1)}{\rho^k(\rho-1)} + \frac{1-e^{-j\operatorname{Re}(\lambda)}}{\rho^k\operatorname{Re}(\lambda)} \label{int0tot}
\end{eqnarray}
and 
\begin{eqnarray*}
 \int_{0}^{\infty} \frac{\nabla s}{|\widehat{e}_{\lambda}(\rho(s),0)|} 
  = \lim_{t\rightarrow\infty}\int_{0}^{t} \frac{\nabla s}{|\widehat{e}_{\lambda}(\rho(s),0)|} 
	= \frac{\rho(1-e^{-\alpha\operatorname{Re}(\lambda)}))}{(\rho-1)\operatorname{Re}(\lambda)} + \frac{e^{-\alpha\operatorname{Re}(\lambda)}\beta \rho}{\rho-1} .
\end{eqnarray*}
Using these two integral values, we have
\begin{eqnarray*} 
 |\phi(t)-x(t)| &=& |\widehat{e}_{\lambda}(t,0)| \left|-\int_{t}^{\infty} \frac{q(s)}{\widehat{e}_{\lambda}(\rho(s),0)}\nabla s\right| \\
	&\le& \varepsilon |\widehat{e}_{\lambda}(t,0)| \int_{t}^{\infty} \frac{1}{|\widehat{e}_{\lambda}(\rho(s),0)|}\nabla s \\
	& = & \varepsilon |\widehat{e}_{\lambda}(t,0)| \left(\int_{0}^{\infty}-\int_{0}^{t} \right)\frac{1}{|\widehat{e}_{\lambda}(\rho(s),0)|}\nabla s  \\
	& = & \varepsilon \left(\frac{\rho-1+e^{j\operatorname{Re}(\lambda)}+\rho e^{(j-\alpha)\operatorname{Re}(\lambda)}\left(\beta\operatorname{Re}(\lambda)-1\right)}{(\rho-1)\operatorname{Re}(\lambda)}\right)
\end{eqnarray*}
for $j\in[0,\alpha]$, and for fixed $z\in\Z$, $\rho>1$, $\theta\in[0,2\pi)$ that determine $\lambda\in\C$ with $\operatorname{Re}(\lambda)\ne 0$. Set 
$K$ as in \eqref{KRbigger1}, that is,
$$ K_{\rho>1}:=\max_{j\in[0,\alpha]}\left(\frac{\rho-1+e^{j\operatorname{Re}(\lambda)}+\rho e^{(j-\alpha)\operatorname{Re}(\lambda)}\left(\beta\operatorname{Re}(\lambda)-1\right)}{(\rho-1)\operatorname{Re}(\lambda)}\right). $$
Therefore, \eqref{nabeq} has HUS for $\lambda = \frac{1}{\beta}-\frac{1}{\alpha}W_z\left(\frac{\alpha}{\beta \rho}e^{\frac{\alpha}{\beta}-i\theta}\right)$ with $\operatorname{Re}(\lambda)\ne 0$ and $\rho>1$, with HUS constant at most $K_{\rho>1}$. If $\operatorname{Re}(\lambda)=0$, then
\begin{eqnarray*} 
 \lim_{\operatorname{Re}(\lambda)\rightarrow 0} K_{\rho>1} &=& \lim_{\operatorname{Re}(\lambda)\rightarrow 0}\max_{j\in[0,\alpha]}\left(\frac{\rho-1+e^{j\operatorname{Re}(\lambda)}+\rho e^{(j-\alpha)\operatorname{Re}(\lambda)}\left(\beta\operatorname{Re}(\lambda)-1\right)}{(\rho-1)\operatorname{Re}(\lambda)}\right) \\
  &=& \max_{j\in[0,\alpha]}\frac{\rho(\beta+\alpha)+j(1-\rho)}{\rho-1} \\
	&=& \frac{\rho(\beta+\alpha)}{\rho-1},
\end{eqnarray*}
since $\rho>1$ in this case. As a result, \eqref{nabeq} is HUS with HUS constant at most $K_{\rho>1}=\frac{\rho(\beta+\alpha)}{\rho-1}$, for $\rho>1$ and $\operatorname{Re}(\lambda)=0$. In either instance, case (ii) holds.

Case (iii).
Finally, let $\lambda = \frac{1}{\beta}-\frac{1}{\alpha}W_z\left(\frac{\alpha}{\beta \rho}e^{\frac{\alpha}{\beta}-i\theta}\right)$ for $\theta\in[0,2\pi)$ and $\rho\in(0,1)$, let the exponential function be given by \eqref{pabexp0}, and let $\phi$ satisfy \eqref{phiineq}. Using $\rho\in(0,1)$ and \eqref{int0tot}, as well as $t=k(\alpha+\beta)+j$ for $j\in[0,\alpha]$, we can modify \eqref{int0tot} to get
\begin{eqnarray}
 \int_{0}^{t} \frac{\nabla s}{|\widehat{e}_{\lambda}(\rho(s),0)|} 
  &=& \frac{\rho(1-\rho^k)(1-e^{-\alpha\operatorname{Re}(\lambda)})}{\rho^k(1-\rho)\operatorname{Re}(\lambda)} + \frac{e^{-\alpha\operatorname{Re}(\lambda)}\beta \rho(1-\rho^k)}{\rho^k(1-\rho)} + \frac{1-e^{-j\operatorname{Re}(\lambda)}}{\rho^k\operatorname{Re}(\lambda)} \label{int0totsmallR}. 
\end{eqnarray}
If $\phi$ satisfies the perturbed equation \eqref{phiineq}, then $\phi$ is again given as in \eqref{phieqform}.
Let $x$ be a solution of \eqref{nabeq} with form \eqref{xform}, where $x_0=\phi_0-\varepsilon\left(\frac{e^{-\alpha\operatorname{Re}(\lambda)}\beta \rho}{1-\rho}+\frac{\rho(1-e^{-\alpha\operatorname{Re}(\lambda)})}{(1-\rho)\operatorname{Re}(\lambda)}\right)$; note that both fractions in the parentheses here are positive, due to $\rho\in(0,1)$ and $\operatorname{Re}(\lambda)<0$ in this case. Employing \eqref{int0totsmallR} with $t=k(\alpha+\beta)+j$, we see that
\begin{eqnarray*}
 |\phi(t)-x(t)| &=& |\widehat{e}_{\lambda}(t,0)|\left|\phi_0+\int_0^t \frac{q(s)}{\widehat{e}_{\lambda}(\rho(s),0)}\nabla s-\left(\phi_0-\varepsilon\left(\frac{e^{-\alpha\operatorname{Re}(\lambda)}\beta \rho}{1-\rho}+\frac{\rho(1-e^{-\alpha\operatorname{Re}(\lambda)})}{(1-\rho)\operatorname{Re}(\lambda)}\right)\right)\right| \\
 &=& |\widehat{e}_{\lambda}(t,0)|\left|\int_0^t \frac{q(s)}{\widehat{e}_{\lambda}(\rho(s),0)}\nabla s+\varepsilon\left(\frac{e^{-\alpha\operatorname{Re}(\lambda)}\beta \rho}{1-\rho}+\frac{\rho(1-e^{-\alpha\operatorname{Re}(\lambda)})}{(1-\rho)\operatorname{Re}(\lambda)}\right)\right| \\
 &\le& \varepsilon |\widehat{e}_{\lambda}(t,0)|\left(\int_0^t \frac{1}{|\widehat{e}_{\lambda}(\rho(s),0)|}\nabla s + \frac{e^{-\alpha\operatorname{Re}(\lambda)}\beta \rho}{1-\rho}+\frac{\rho(1-e^{-\alpha\operatorname{Re}(\lambda)})}{(1-\rho)\operatorname{Re}(\lambda)}\right) \\
 &=& \varepsilon \rho^ke^{j\operatorname{Re}(\lambda)}\left(\frac{\rho(1-e^{-\alpha\operatorname{Re}(\lambda)})}{\rho^k(1-\rho)\operatorname{Re}(\lambda)} + \frac{e^{-\alpha\operatorname{Re}(\lambda)}\beta \rho}{\rho^k(1-\rho)} + \frac{1-e^{-j\operatorname{Re}(\lambda)}}{\rho^k\operatorname{Re}(\lambda)}\right) \\
 &=& \varepsilon e^{j\operatorname{Re}(\lambda)} \left(\frac{\rho(1-e^{-\alpha\operatorname{Re}(\lambda)})}{(1-\rho)\operatorname{Re}(\lambda)} + \frac{e^{-\alpha\operatorname{Re}(\lambda)}\beta \rho}{1-\rho} + \frac{1-e^{-j\operatorname{Re}(\lambda)}}{\operatorname{Re}(\lambda)}\right) \\
 &\le& \varepsilon\left(\frac{\rho-1+e^{j\operatorname{Re}(\lambda)}+\rho e^{(j-\alpha)\operatorname{Re}(\lambda)}\left(\beta\operatorname{Re}(\lambda)-1\right)}{(1-\rho)\operatorname{Re}(\lambda)}\right)
\end{eqnarray*}
for $j\in[0,\alpha]$, as $\rho\in(0,1)$.
Therefore \eqref{nabeq} has HUS for $\lambda = \frac{1}{\beta}-\frac{1}{\alpha}W_z\left(\frac{\alpha}{\beta \rho}e^{\frac{\alpha}{\beta}-i\theta}\right)$ for $\rho\in(0,1)$, with HUS constant given by at most $K=K_{0<\rho<1}$ given in \eqref{Krsmaller1}.
This ends the proof.
\end{proof}

We end this section with another new result. 


\begin{theorem}
The HUS constants given in Theorem $\ref{realnablaHUS}$ and those given in Theorem $\ref{complexTHMHUS}$ are equivalent in the case where the eigenvalue $\lambda$ satisfies $\lambda\in\R$. 
\end{theorem}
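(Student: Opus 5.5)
The plan is to specialize the complex formulas of Theorem \ref{complexTHMHUS} to real $\lambda$ and check, case by case, that they reduce to the constants in Theorem \ref{realnablaHUS}. For $\lambda\in\R\backslash\{\frac1\beta\}$ we have $\operatorname{Re}(\lambda)=\lambda$. The base $e^{\alpha\lambda}/(1-\beta\lambda)$ is real, so $\rho=\left|e^{\alpha\lambda}/(1-\beta\lambda)\right|$, with $\theta=0$ when $\lambda<\frac1\beta$ and $\theta=\pi$ when $\lambda>\frac1\beta$. The letter $R$ appearing in \eqref{KRbigger1} and \eqref{Krsmaller1} is read as $\rho$, as in the proof of Theorem \ref{complexTHMHUS}. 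The key algebraic identity is
\[ \rho e^{-\alpha\lambda}(\beta\lambda-1)=\begin{cases}-1,&\lambda<\frac1\beta,\\ \;\;\,1,&\lambda>\frac1\beta.\end{cases} \]
Consequently the numerator $\rho-1+e^{j\lambda}+\rho e^{(j-\alpha)\lambda}(\beta\lambda-1)$ simplifies to $\rho-1$ when $\lambda<\frac1\beta$, and to $\rho-1+2e^{j\lambda}$ when $\lambda>\frac1\beta$.

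\emph{Case $\lambda<\frac1\beta$, $\lambda\neq0$.} For $\lambda<0$ we have $1-\beta\lambda>1>e^{\alpha\lambda}$, so $\rho<1$, and \eqref{Krsmaller1} becomes $\frac{\rho-1}{(1-\rho)\lambda}=-\frac1\lambda=\frac1{|\lambda|}$, independently of $j$. For $0<\lambda<\frac1\beta$ we have $e^{\alpha\lambda}>1>1-\beta\lambda$, so $\rho>1$, and \eqref{KRbigger1} becomes $\frac{\rho-1}{(\rho-1)\lambda}=\frac1\lambda=\frac1{|\lambda|}$. Both values agree with the constant $K=\frac1{|\lambda|}$ in Theorem \ref{realnablaHUS}, parts (i)(a), (ii)(a) and (iii)(a). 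One point needs care: in part (ii)(a) the interval ends at $\frac{W_0(e^{-1})}{\alpha}$, which exceeds $\frac1\beta$ when $\beta=\frac{\alpha}{W_0(e^{-1})}$ only in the sense that the two coincide. I will check this endpoint bookkeeping against the list of cases.

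\emph{Case $\lambda>\frac1\beta$.} Here $\lambda>0$ and the expression to maximize is $\frac{\rho-1+2e^{j\lambda}}{\pm(\rho-1)\lambda}$, where the sign is chosen so that the denominator is positive. It is therefore increasing in $j$, and the maximum is attained at $j=\alpha$, giving $\frac{|\rho-1+2e^{\alpha\lambda}|}{|\rho-1|\,\lambda}$. Next substitute $\rho=\frac{e^{\alpha\lambda}}{\beta\lambda-1}$ and multiply numerator and denominator by $\beta\lambda-1>0$. This yields
\[ \left|\frac{1-\beta\lambda+e^{\alpha\lambda}(2\beta\lambda-1)}{\lambda\,(e^{\alpha\lambda}-\beta\lambda+1)}\right| =\left|\frac{\beta\lambda-1+e^{\alpha\lambda}(1-2\beta\lambda)}{\lambda(\beta\lambda-1-e^{\alpha\lambda})}\right|=|\widehat K_{\R}|, \]
which is the constant in Theorem \ref{realnablaHUS}. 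For $\rho>1$ (that is, $\lambda$ outside $[\widehat\lambda_0,\widehat\lambda_{-1}]$, or all $\lambda>\frac1\beta$ in the small-gap case) the quantity inside the absolute value is already positive, so it equals $\widehat K_{\R}$ itself. Finally, $\rho=1$ corresponds exactly to $\lambda=0$, $\widehat\lambda_0$, $\widehat\lambda_{-1}$, and $\frac1\alpha(1+W_0(e^{-1}))$. These are precisely the non-HUS points in both theorems.

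The main obstacle I expect is sign and monotonicity bookkeeping rather than any new idea. I must confirm that $\rho<1$ on $(\widehat\lambda_0,\widehat\lambda_{-1})$ and $\rho>1$ elsewhere on $(\frac1\beta,\infty)$; this uses the shape of $\lambda\mapsto e^{\alpha\lambda}/(\beta\lambda-1)$, which decreases and then increases, with the crossings of $1$ at $\widehat\lambda_0$ and $\widehat\lambda_{-1}$ as in Theorem \ref{realnablathm}. I must also verify that the numerator $\rho-1+2e^{j\lambda}$ keeps a fixed sign, so that the maximum over $j$ really sits at $j=\alpha$ and equals the absolute value stated in Theorem \ref{realnablaHUS}.
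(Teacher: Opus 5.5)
Your proposal is correct and follows the same route as the paper: fix $\theta\in\{0,\pi\}$ and $\rho=\left|e^{\alpha\lambda}/(1-\beta\lambda)\right|$ for real $\lambda$, then reduce \eqref{KRbigger1} and \eqref{Krsmaller1} to $1/|\lambda|$ when $\lambda<\frac1\beta$, and to $\widehat K_{\R}$ (attained at $j=\alpha$) when $\lambda>\frac1\beta$. Your version is more complete than the paper's in three respects: the identity $\rho e^{-\alpha\lambda}(\beta\lambda-1)=\mp1$ justifies taking the maximum at $j=\alpha$; you use the correct $\rho=e^{\alpha\lambda}/(\beta\lambda-1)$ for $\lambda>\frac1\beta$, whereas the paper's displayed choice contains a sign slip; and you treat the subcase $\rho<1$, $\lambda\in(\widehat\lambda_0,\widehat\lambda_{-1})$, where \eqref{Krsmaller1} gives $|\widehat K_{\R}|$, which the paper does not handle explicitly.
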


\begin{proof}
Recall the polar substitution
\[ (1-\beta\lambda)^{-1}e^{\alpha\lambda} = \rho e^{i\theta}, \qquad \theta\in[0,2\pi). \]
Since the left-hand side is a real-valued expression for $\lambda\in\R$, we must take
\[ \theta = \begin{cases} 0 &\text{if}\quad 1-\beta\lambda>0, \\ \pi &\text{if}\quad 1-\beta\lambda<0. \end{cases} \]
In \eqref{KRbigger1}, take 
\[ \rho = \begin{cases} (1-\beta\lambda)^{-1}e^{\alpha\lambda} &\text{if}\quad 1-\beta\lambda>0, \\  (1-\beta\lambda)^{-1}e^{-\alpha\lambda} &\text{if}\quad 1-\beta\lambda<0. \end{cases} \]
Then, \eqref{KRbigger1} with $j=\alpha$ reduces to
\begin{align*} 
 K_{\rho>1} &=\max_{j\in[0,\alpha]}\left(\frac{\rho-1+e^{j\operatorname{Re}(\lambda)}+\rho e^{(j-\alpha)\operatorname{Re}(\lambda)}\left(\beta\operatorname{Re}(\lambda)-1\right)}{(\rho-1)\operatorname{Re}(\lambda)}\right) \\
 &= \begin{cases} \frac{1}{\lambda} &\text{if}\quad 1-\beta\lambda>0, \\ \frac{\beta\lambda-1+e^{\alpha\lambda}(1-2\beta\lambda)}{\lambda\left(\beta\lambda-1-e^{\alpha\lambda}\right)} &\text{if}\quad 1-\beta\lambda<0. \end{cases}
\end{align*}
Note the second constant is $\widehat{K}_{\rho}$ given in \eqref{Krealnab}. Similarly,
\[ K_{0<\rho<1} = \max_{j\in[0,\alpha]}\left(\frac{\rho-1+e^{j\operatorname{Re}(\lambda)}+\rho e^{(j-\alpha)\operatorname{Re}(\lambda)}\left(\beta\operatorname{Re}(\lambda)-1\right)}{(1-\rho)\operatorname{Re}(\lambda)}\right) = - \frac{1}{\lambda} \]
if $\lambda<0$. 
\end{proof}


\section*{Funding}
The author is supported by the AMS--Simons Research Enhancement Grants for Primarily Undergraduate Institution (PUI) Faculty (2025--2028), UID 265793.


\label{lastpage}
\end{document}